\theoremstyle{remark}{
\newtheorem{Def}{{\rm Definition}}
\newtheorem{Ex}{{\rm Example}}

\newtheorem{Prob}{{\rm Problem}}

}
\theoremstyle{plain}
{

\newtheorem{Prop}{Proposition}
\newtheorem{Thm}{Theorem}

}
\begin{document}
\title[Labeled versions of Poincar\'e-Reeb graphs.]{Arrangements of circles, the regions surrounded by them and labeled Poincar\'e-Reeb graphs}
\author{Naoki kitazawa}
\keywords{Arrangements (of circles). Singularity theory of differentiable maps. (Non-singular) real algebraic manifolds and real algebraic maps. Poincar\'e-Reeb Graphs. Reeb graphs. Graphs. Circles in the Euclidean plane.\\
\indent {\it \textup{2020} Mathematics Subject Classification}: Primary~14P05, 14P10, 52C15, 57R45. Secondary~ 58C05.}

\address{Institute of Mathematics for Industry, Kyushu University, 744 Motooka, Nishi-ku Fukuoka 819-0395, Japan\\
 TEL (Office): +81-92-802-4402 \\
 FAX (Office): +81-92-802-4405 \\
}
\email{n-kitazawa@imi.kyushu-u.ac.jp, naokikitazawa.formath@gmail.com}
\urladdr{https://naokikitazawa.github.io/NaokiKitazawa.html}
\maketitle
\begin{abstract}
We are interested in arrangements of circles and the regions surrounded by them. {\it Poincar\'e-Reeb graphs} have been fundamental and strong tools in studying shapes of regions surrounded by real algebraic curves, since around 2020. They are natural graphs the regions naturally collapse to and were first formulated by Sorea with several researchers. Studying shapes of such regions is one of fundamental studies in real algebraic geometry and combinatorics for example. This is surprisingly new and recently developing. 

Our study introduces labels on vertices and edges of such graphs encoding information of the circles where we concentrate on regions surrounded by circles. The author studied local changes of Poincar\'e-Reeb graphs by addition of circles under certain rules before and we discuss changes of new types. The author has started related studies motivated by singularity theory of real algebraic maps and found first that our regions are the images of natural real algebraic maps, generalizing natural projections of spheres.

\end{abstract}
\section{Introduction.}
\label{sec:1}

Regions surrounded by real algebraic curves are natural objects in real algebraic geometry and combinatorics for example. One of fundamental and important studies is to study shapes of them. 
Such a study is also new and recently developing. \cite{bodinpopescupampusorea, sorea1, sorea2} are of related pioneering studies.

We are also interested in shapes of natural graphs these regions collapse to. 
Especially, the author is also interested in regions surrounded by circles and local changes of graphs according to addition of circles under certain rules. Related studies are \cite{kitazawa3, kitazawa4} for example. The author is also interested in singularity theory of differentiable maps and applications to differential topology of manifolds, especially, interested in construction of explicit smooth maps and real algebraic maps. More explicitly, one of related interest lies in explicitly constructing functions and maps generalizing natural projections of spheres ({\it the canonical projections of the unit spheres}), and (the closures of) our regions are the images of natural real algebraic maps. Related pioneering studies are \cite{kitazawa1, kitazawa2} and followed by \cite{kitazawa3, kitazawa4}.

\subsection{Notation on topological spaces and manifolds.}
This is for fundamental notation on topological spaces and manifolds. We use ${\mathbb{R}}^n$ for the $n$-dimensional Euclidean space, equipped with the standard Euclidean metric. 

We use $||x|| \geq 0$ for the distance between $x \in {\mathbb{R}}^n$ and the origin $0 \in {\mathbb{R}}^n$ under the metric. This is also seen as a real vector space (Hilbert space) of dimension $n$.
We also use $\mathbb{R}:={\mathbb{R}}^1$.

We use ${\pi}_{m,n,i}:{\mathbb{R}}^m \rightarrow {\mathbb{R}}^n$ with $m>n \geq 1$ for the canonical projection (let $x=(x_1,x_2) \in {\mathbb{R}}^{n} \times {\mathbb{R}}^{m-n}={\mathbb{R}}^m$ and ${\pi}_{m,n,i}(x)=x_i$ where $i=1,2$). The set $D^k:=\{x \in {\mathbb{R}}^k \mid ||x|| \leq 1\}$ is the $k$-dimensional unit disk and $S^k:=\{x \in {\mathbb{R}}^{k+1} \mid ||x||=1\}$ is the $k$-dimensional unit sphere.

For a subspace $Y \subset X$ of a topological space $X$, we use $\overline{Y}$ for its closure in $X$ and $Y^{\circ}$ for the interior in $X$: unless otherwise stated we do not refer to the outer space $X$. 

We use $\partial X$ for the boundary of a (topological) manifold $X$.  
\subsection{Our arrangements of circles.}

\label{subsec:1.2}
This reviews some of \cite{kitazawa3, kitazawa4}.

\begin{Def}
\label{def:1}
In our paper, a {\it circle} {\it centered at} $x_0=(x_{0,1},x_{0,2}) \in {\mathbb{R}}^2$ and of {\it radius} $r>0$ means a set of the form $S _{x_0,r}:=\{x \mid ||x-x_0||=r\} \subset {\mathbb{R}}^2$. We call the points $(x_{0,1}+r\cos \frac{j\pi}{4},x_{0,2}+r\sin \frac{j\pi}{4})$ {\it $(j,\frac{\pi}{4})$-poles} of the circle  {\rm (}$j=0,1,2,3,4,5,6,7${\rm )}. These eight points divide the circle into eight connected arcs in the circle. We call each arc connecting $(x_{0,1}+r\cos \frac{j\pi}{4},x_{0,2}+r\sin \frac{j\pi}{4})$ and $(x_{0,1}+r\cos \frac{(j+1)\pi}{4},x_{0,2}+r\sin \frac{(j+1)\pi}{4})$ the {\it $(j,j+1) \frac{\pi}{4}$-arc} of the circle {\rm (}$j=0,1,2,3,4,5,6,7${\rm )}. 

\end{Def}


\begin{Def}
\label{def:2}
We call a pair $(\mathcal{S},D_{\mathcal{S}})$ of a set $\mathcal{S}:=\{S_{x_{j},r_{j}}\}$ of circles and a bounded connected component $D_{\mathcal{S}} \subset {\mathbb{R}}^2$ of the complementary set ${\mathbb{R}}^2-{\bigcup}_{S_{x_{j},r_{j}} \in \mathcal{S}} S_{x_{j},r_{j}}$ a {\it normally inductive arrangement} ({\it NI arrangement}) {\it of circles} if we have $\mathcal{S}$ and $D_{\mathcal{S}}$ inductively as follows.
\begin{enumerate}
\item \label{def:3.1} First choose a non-empty set ${\mathcal{S}}_0$ of mutually disjoint circles $S_{x_{j,0},r_{j,0}} \in {\mathcal{S}}_0$ such that the boundary $\partial \overline{D_{{\mathcal{S}}_0}}$ of the closure $\overline{D_{{\mathcal{S}}_0}}$ of the bounded connected component $D_{{\mathcal{S}}_0} \subset {\mathbb{R}}^2$ of the complementary set ${\mathbb{R}}^2-{\bigcup}_{S_{x_{j,0},r_{j,0}} \in 
{\mathcal{S}}_0} S_{x_{j,0},r_{j,0}}$ is the disjoint union ${\sqcup}_{S_{x_{j,0},r_{j,0}} \in {\mathcal{S}}_0} S_{x_{j,0},r_{j,0}}$.
\item \label{def:3.2} Put ${\mathcal{S}}:={\mathcal{S}}_0$.

We choose a new circle $S_{x_{j^{\prime}},r_{j^{\prime}}}$ of radius $r_{j^{\prime}}>0$ centered at $x_{j^{\prime}} \in {\mathbb{R}}^2$ and intersecting at least one circle from the set ${\mathcal{S}}$. We also choose the circle obeying the following rule: two distinct circles $S_{x_{j},r_{j}}$ and $S_{x_{j^{\prime}},r_{j^{\prime}}}$ always intersect at points in $\overline{D_{\mathcal{S}}}$ according to the rule that for each point $p_{j,j^{\prime}}$ of such points, the sum of the tangent vector spaces of $S_{x_{j},r_{j}}$  and $S_{x_{j^{\prime}},r_{j^{\prime}}}$ at $p_{j,j^{\prime}}$ coincides with the tangent vector space of ${\mathbb{R}}^2$ at $p_{j,j^{\prime}}$, and three distinct circles $S_{x_{j_1},r_{j_1}}$, $S_{x_{j_2},r_{j_2}}$, and $S_{x_{j^{\prime}},r_{j^{\prime}}}$ do not intersect at any point in $\overline{D_{\mathcal{S}}}$. 
We define the new set ${\mathcal{S}}^{\prime}$ as the disjoint union $\mathcal{S} \sqcup \{S_{x_{j^{\prime}},r_{j^{\prime}}}\}$ and the new region $D_{{\mathcal{S}}^{\prime}}$ as the intersection $D_{\mathcal{S}} \bigcap {E_{x_{j^{\prime}},r_{j^{\prime}}}}$ where $E_{x_{j^{\prime}},r_{j^{\prime}}}:={D_{x_{j^{\prime}},r_{j^{\prime}}}}^{\circ}$ or $E_{x_{j^{\prime}},r_{j^{\prime}}}:={\mathbb{R}}^2-D_{x_{j^{\prime}},r_{j^{\prime}}}$ with $D_{x_{j^{\prime}},r_{j^{\prime}}} \subset {\mathbb{R}}^2$, denoting the closed disk whose boundary is $S_{x_{j^{\prime}},r_{j^{\prime}}}$. We also pose a constraint that the intersections $\overline{D_{{\mathcal{S}}}^{\prime}} \bigcap S _{x_j,r_j}$ and $\overline{D_{{\mathcal{S}}}^{\prime}} \bigcap S _{x_{j^{\prime}},r_{j^{\prime}}}$ are non-empty sets for all circles $S _{x_j,r_j}, S_{x_{j^{\prime}},r_{j^{\prime}}} \in {\mathcal{S}}^{\prime}$ here.

We put $(\mathcal{S},D_{\mathcal{S}}):=({\mathcal{S}}^{\prime},D_{{\mathcal{S}}^{\prime}})$. 
We do this procedure inductively.
\end{enumerate}
\end{Def}

\subsection{Organization of our paper and our main work.}
In the next section, we introduce fundamental terminologies and notation on topological spaces decomposed into cells, smooth maps between smooth manifolds, and (V-di)graphs (with their vertices and edges).

In the third section, we review Poincar\'e-Reeb graphs of regions $D_{\mathcal{S}}$ surrounded by all circles of $\mathcal{S}$. They are graphs the regions naturally collapse to respecting the projections ${\pi}_{2,1,i}$. 
They are the spaces of all connected components of all preimages of single points for the restrictions of the projections ${\pi}_{2,1,i}$ and the quotient spaces of the closures $\overline{D_{\mathcal{S}}}$ of the regions topologized with the corresponding quotient topologies. We also introduce one of our main result in the second subsection of the third section (Theorem \ref{thm:1}). More precisely, we assign labels to vertices and edges respecting the preimages for ${\pi}_{2,1,i}$, curves there which are also in some circles of $\mathcal{S}$, and $(j,j+1)\frac{\pi}{4}$-arcs of these circles of $\mathcal{S}$ which are also in the preimages. We also see some fundamental properties.

In the fourth section, we discuss a certain rule of addition of circles to given NI arrangements of circles and investigate the local changes of our Poincar\'e-Reeb graphs. We present our related new result as Theorems \ref{thm:2} and \ref{thm:3}. We also review related previous studies and compare our new result to previously obtained related results.

The fifth section is devoted to remarks.

\section{Topological spaces decomposed into cells, smooth maps and graphs.}
\label{sec:2}

The dimension $\dim X$ of a non-empty topological space $X$ is the unique non-negative integer and gives a topological invariant for such spaces. (Topological) manifolds, graphs, polyhedra, and CW complexes are of the class of such spaces.

For a differentiable manifold $X$, let $T_p X$ denote the tangent vector space at $p \in X$.
For a differentiable map $c:X \rightarrow Y$ between differentiable manifolds $X$ and $Y$, the differential ${dc}_p:T_pX \rightarrow T_{c(p)} Y$ at $p \in X$ is defined as a linear map. The point $p$ is a {\it singular} point of $c$ if the rank of ${dc}_p$ drops. If these manifolds are seen as smooth ones or ones of the class $C^{\infty}$, the space $C^{\infty}(X,Y)$ of all smooth maps between them are topologized with the so-called {\it Whitney $C^{\infty}$ topology}. This topology is, roughly, a topology making two smooth maps close if and only if their values and the values of their derivations at each point are close.  We do not assume related knowledge. For related singularity theory of differentiable (smooth) maps, consult \cite{golubitskyguillemin} for example.

A {\it diffeomorphism} means a smooth map which is also a homeomorphism having no singular point. A {\it diffeomorphism on a smooth manifold} $X$ means a diffeomorphism from $X$ onto itself. The {\it diffeomorphism group} of $X$ is the group of all diffeomorphisms on $X$. A {\it smooth} bundle means a bundle whose fiber is a smooth manifold and whose structure group is a subgroup of the diffeomorphism group of the fiber.

A graph is a CW complex of dimension $1$. $0$-cells of the graph are {\it vertices} (of the graph). $1$-cells of the graph are {\it edges} (of the graph). Note that edges are open subsets of the graph here. The set of all vertices (edges) of the graph is called the {\it vertex} (resp. {\it edge}) {\it set} (of the graph).
A {\it digraph} is a graph all of whose edges are oriented. A {\it V-digraph} $(K,l_K)$ is a pair of a digraph $K$ and a map $l_K:V_K \rightarrow P$ from the vertex set $V_K$ of $K$ into a totally ordered set $P$ such that the edges are oriented according to the values $l_K(v)$. In other words the relation $l_K(v_1) \neq l_K(v_2)$ holds if there exists an edge $e$ connecting $v_1$ and $v_2$ and $e$ is an oriented edge departing from $v_1$ and entering $v_2$ if $l_K(v_1)<l_K(v_2)$: here "$<$" is for the order on $P$. The {\it degree} of a vertex of a graph means the number of edges of the graph containing the vertex.
\begin{Def}
Two graphs $K_1$ and $K_2$ are said to be {\it isomorphic} if a piecewise smooth homeomorphism $\phi:K_1 \rightarrow K_2$ mapping the vertex set of $K_1$ onto that of $K_2$ exists. This map is an {\it isomorphism} between the graphs. Two digraphs are said to be {\it isomorphic} if an isomorphism of the (underlying) graphs preserving the orientations of the edges exists. This is an {\it isomorphism} between the digraphs.
Two V-digraphs are said to be {\it isomorphic} if an isomorphism of the (underlying) digraphs preserving the orders of the values of $l_K$ exists. This is an {\it isomorphism} between the V-digraphs.
\end{Def}

\section{Poincar\'e-Reeb V-digraphs with labels.}
\label{sec:3}
\subsection{Reviewing the Poincar\'e-Reeb V-digraph of $(\mathcal{S},D_{\mathcal{S}})$ for ${\pi}_{2,1,i}$.}
We review the Poincar\'e-Reeb (V-di)graph of $(\mathcal{S},D_{\mathcal{S}})$ for ${\pi}_{2,1,i}$. The underlying set is the set of all connected components of all preimages of all single points for ${\pi}_{2,1,i} {\mid}_{\overline{D_{\mathcal{S}}}}$. We can topologize this by the quotient topology. The underlying graph is the graph whose vertex set consists of all points (connected components of the preimages) containing $(j,\frac{\pi}{4})$-poles with $j=0,4$ (resp. $(j,\frac{\pi}{4})$-poles with $j=2,6$) of some circles of $\mathcal{S}$ or points in exactly two distinct circles of $\mathcal{S}$ in the case $i=1$ (resp. $i=2$). 

We review essential parts in proving that this is a graph where more precise exposition is left to \cite{kitazawa3}. We do not need to understand related arguments well. Over interiors of edges, the projection ${\pi}_{2,1,i} {\mid}_{D_{\mathcal{S}}}$ yields the structure of a trivial smooth bundle whose fiber is diffeomorphic to a closed interval $D^1$. So-called (relative) Ehresmann fibration theorem is applied. Local structures around vertices can be easily checked.

\cite{saeki1, saeki2} also explain this fact from some general viewpoint.

We use $q_{D_{\mathcal{S}},i}:\overline{D_{\mathcal{S}}} \rightarrow G_{D_{\mathcal{S}},i}$ for the quotient map onto the graph $G_{D_{\mathcal{S}},i}$.
This is also a V-digraph: we uniquely have the function $V_{D_{\mathcal{S}},i}:G_{D_{\mathcal{S}},i} \rightarrow \mathbb{R}$ satisfying the relation ${\pi}_{2,1,i} {\mid}_{\overline{D_{\mathcal{S}}}}=V_{D_{\mathcal{S}},i} \circ q_{D_{\mathcal{S}},i}$. We consider the pair of the graph and the restriction of the function to the vertex set and call this V-graph the {\it Poincar\'e-Reeb V-digraph of $(\mathcal{S},D_{\mathcal{S}})$ for ${\pi}_{2,1,i}$}.

\subsection{The Poincar\'e-Reeb V-digraph of $(\mathcal{S},D_{\mathcal{S}})$ for ${\pi}_{2,1,i}$ with labels.}
We introduce labels to the V-digraphs. This is our new work.

Hereafter, we also assume fundamental knowledge on plane geometry. More precisely, geometry of straight lines and circles in the Euclidean place ${\mathbb{R}}^2$.

For subsets $A_1$ and $A_2$  in ${\mathbb{R}}^2$ such that the restrictions of ${\pi}_{2,1,1}$ (${\pi}_{2,1,2}$) are injective and the images in $\mathbb{R}$ are same, $A_1$ is {\it above} (resp. {\it in the right of}) $A_2$ if $(x_{1,1},x_{1,2,1}) \in A_1$ and $(x_{1,1},x_{1,2,2}) \in A_2$, then $x_{1,2,1}>x_{1,2,2}$ always holds (resp. if $(x_{1,1,1},x_{1,2}) \in A_1$ and $(x_{1,1,2},x_{1,2}) \in A_2$, then $x_{1,1,1}>x_{1,1,2}$ always holds).

\subsubsection{Labels to edges.}
 For each edge $e$ of the Poincar\'e-Reeb V-digraph of $(\mathcal{S},D_{\mathcal{S}})$ for ${\pi}_{2,1,i}$, we can consider ${(q_{D_{\mathcal{S}},i})}^{-1}(e)$ and the intersection ${(q_{D_{\mathcal{S}},i})}^{-1}(e) \bigcap \partial \overline{D_{\mathcal{S}}}$ consists of two smooth connected curves $C_{e,1}$ and $C_{e,2}$ the restrictions of ${\pi}_{2,1,i}$ to which are injective and have no singular points. We can also define $C_{e,2}$ is above $C_{e,1}$ in ${\mathbb{R}}^2$ in the case $i=1$ and we do so. We can also define $C_{e,2}$ is in the right of $C_{e,1}$ in ${\mathbb{R}}^2$ in the case $i=2$ and we do so. The two curves
$C_{e,1}$ and $C_{e,2}$ are also in the unique circles $S_{e,1}$ and $S_{e,2}$ of $\mathcal{S}$, respectively. The relation $S_{e,1}=S_{e,2}$ may hold.
We assign a pair of the following two sets of curves.
\begin{itemize}
\item The set of all $(j^{\prime},j^{\prime}+1) \frac{\pi}{4}$-arcs of $S_{e,1}$ intersecting $C_{e,1}$. 
\item The set of all $(j^{\prime},j^{\prime}+1) \frac{\pi}{4}$-arcs of $S_{e,2}$ intersecting $C_{e,2}$. 
\end{itemize}
In the case $S_{e,1}=S_{e,2}$, we have the following easily.
\begin{Prop}
\label{prop:1}
We concentrate on the case $S_{e,1}=S_{e,2}$. In the case $i=1$ {\rm (}$i=2${\rm )}, for elements in the first set here, it must hold that $4 \leq j^{\prime} \leq 7$ {\rm (}resp. $j^{\prime}=2,3,4,5${\rm )} and for elements in the second set here, it must hold that $0 \leq j^{\prime} \leq 3$ {\rm (}resp. $j^{\prime}=0,1,6,7${\rm )}.
\end{Prop}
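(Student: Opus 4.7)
The plan is to exploit that the restrictions of $\pi_{2,1,i}$ to $C_{e,1}$ and $C_{e,2}$ are injective immersions and lie on the common circle $S_{e,1}=S_{e,2}$, and to identify precisely which poles are the singular points of $\pi_{2,1,i}$ restricted to this circle.

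First I would parametrize $S_{e,1}$ as $(x_{0,1}+r\cos\theta, x_{0,2}+r\sin\theta)$, so that its tangent direction is $(-r\sin\theta, r\cos\theta)$. For $i=1$ the projection $\pi_{2,1,1}$ restricted to $S_{e,1}$ fails to be an immersion exactly when $\sin\theta=0$, i.e.\ at $\theta=0,\pi$, which are the $(j,\frac{\pi}{4})$-poles with $j=0,4$. For $i=2$ the corresponding singular points occur at $\theta=\pi/2,3\pi/2$, i.e.\ at the $(j,\frac{\pi}{4})$-poles with $j=2,6$. Since the curves $C_{e,k}$ contain no singular points of $\pi_{2,1,i}$ and are connected, each of them is entirely contained in one of the two open semicircles obtained by deleting these two poles from $S_{e,1}$.

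Second, on each such open semicircle $\pi_{2,1,i}$ is strictly monotone, hence injective, and $C_{e,1}$ and $C_{e,2}$ have the same $\pi_{2,1,i}$-image. If both lay in the same open semicircle they would therefore coincide, contradicting that $C_{e,2}$ is strictly above $C_{e,1}$ (case $i=1$) or strictly to the right of $C_{e,1}$ (case $i=2$). Thus $C_{e,1}$ and $C_{e,2}$ lie in distinct open semicircles, and the "above"/"to the right of" convention forces $C_{e,1}$ into the lower (resp.\ left) semicircle and $C_{e,2}$ into the upper (resp.\ right) semicircle.

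Finally, I would just translate between open semicircles and the $(j',j'+1)\frac{\pi}{4}$-arcs listed in Definition~\ref{def:1}. For $i=1$ the lower open semicircle (between poles $4$ and $0{=}8$ through pole $6$) is covered exactly by the arcs with $j'=4,5,6,7$ and the upper one by those with $j'=0,1,2,3$; for $i=2$ the left open semicircle is covered by $j'=2,3,4,5$ and the right one by $j'=0,1,6,7$. Any arc meeting $C_{e,k}$ must be one of the arcs contained in the semicircle containing $C_{e,k}$, since $C_{e,k}$ avoids the two separating poles and is connected. This gives the stated ranges of $j'$. The only thing to be slightly careful about is the connectedness/monotonicity step that rules out $C_{e,1}=C_{e,2}$ when $S_{e,1}=S_{e,2}$; everything else is a routine enumeration.
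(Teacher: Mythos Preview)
Your argument is correct and is precisely the elementary observation the paper has in mind when it says ``We have the following easily'' without giving an explicit proof. You have simply spelled out the routine check: the two curves lie on the common circle, avoid the two $\pi_{2,1,i}$-singular poles, hence sit in distinct open semicircles by injectivity, and the above/right convention then pins down which semicircle is which.
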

\subsubsection{For labels to vertices of the graph.}
We prepare for labels to vertices of the graph. \\
\ \\ 
Case 1  For vertices of degree $1$. \\
 For each vertex $v_1$ of degree $1$ of the Poincar\'e-Reeb V-digraph of $(\mathcal{S}, D_{\mathcal{S}})$ for ${\pi}_{2,1,i}$, we investigate the resulting set ${(q_{D_{\mathcal{S}},i})}^{-1}(v_1)$. 

One of the cases is the case that this is a one-point set $x_{v_1}$ contained in exactly one circle $S_{v_1}$ from $\mathcal{S}$. In this case, this is the $(j,\frac{\pi}{4})$-pole with $j=0,4$ in the case $i=1$ and $j=2,6$ in the case $i=2$.

The remaining case is the case that this is a one-point set contained in exactly two circles $S_{v_{1,1}}$ and $S_{v_{1,2}}$ from $\mathcal{S}$. We consider a circle $S_{x_{v_1},\epsilon}$ of a sufficiently small radius $\epsilon>0$ centered at the point $x_{v_1}$.
For the tangent line of each circle $S_{v_{1,j^{\prime}}}$ at $x_{v_1}$, we have the unique point $x_{v_1,j^{\prime}}$ contained both in the small circle $S_{x_{v_1},\epsilon}$ and the tangent line which is either contained in the set $\overline{D_{\mathcal{S}}} \bigcap S_{x_{v_1},\epsilon}$ or closer to $\overline{D_{\mathcal{S}}} \bigcap S_{x_{v_1},\epsilon}$ under the induced metric of the set $S_{x_{v_1},\epsilon} \subset {\mathbb{R}}^2$. 

This argument is also important in the remaining cases. We first argue in a general situation. Hereafter, we use $v_0$ for a vertex of the general case and $x_{v_0} \in {(q_{D_{\mathcal{S}},i})}^{-1}(v_0)$ for a point contained in exactly two distinct circles from $\mathcal{S}$. We use $S_{v_{0,1}}$ and $S_{v_{0,2}}$ for these two circles. We consider the angle formed by the segments connecting the points $x_{v_0,j^{\prime}}$, denoting the points defined in the previously presented way, similarly, and $x_{v_0}$. The value of the angle must be greater than $0$ and smaller than $\pi$. For such arguments, see \cite{kitazawa3} for example.

We go back to the present case.

The segments must be in the (closed) region of the form $\{(x_1,x_2) \mid x_1 \geq {\rm (}\leq {\rm )}\ {\pi}_{2,1,1}(x_{v_1}) \}$ and at most one of the two segments may be in the boundary of the closed region in the case $i=1$. We have the following in addition easily.
\begin{Prop}
\label{prop:2}
In the case $i=1$ with the property that one of the two segments is in the boundary of the closed region $\{(x_1,x_2) \mid x_1 \geq {\rm (}\leq {\rm )}\ {\pi}_{2,1,1}(x_{v_1}) \}$ and that the two segments are in the closed region, the point $x_{v_1}$ must be the $(4,\frac{\pi}{4})$-pole {\rm (}resp. $(0,\frac{\pi}{4})$-pole{\rm )} of the circle $S_{v_{1,j^{\prime \prime}}}$ having the segment as a subset of the tangent line at $x_{v_1}$ and must not be the $(j,\frac{\pi}{4})$-pole of the circle $S_{v_{1,j^{\prime \prime \prime}}}$ having the remaining segment as a subset of the tangent line at $x_{v_1}$ with $j=0,4$. 
\end{Prop}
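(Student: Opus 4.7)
The plan is to use the reflective symmetry $x_1 \mapsto 2c - x_1$ with $c := \pi_{2,1,1}(x_{v_1})$ to reduce to the ``$\geq$'' case: this reflection preserves the NI-arrangement structure, interchanges the two closed half-planes $\{x_1 \geq c\}$ and $\{x_1 \leq c\}$, and swaps the $(0,\frac{\pi}{4})$- and $(4,\frac{\pi}{4})$-poles of any circle through $x_{v_1}$. Assuming the closed region is $\{x_1 \geq c\}$, the claims to prove become that $x_{v_1}$ is the $(4,\frac{\pi}{4})$-pole of $S_{v_{1,j''}}$ and that $x_{v_1}$ is neither the $(0,\frac{\pi}{4})$- nor the $(4,\frac{\pi}{4})$-pole of $S_{v_{1,j'''}}$.

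For the first claim, we first observe that the segment from $x_{v_1}$ to $x_{v_1,j''}$ is a subset of the tangent line to $S_{v_{1,j''}}$ at $x_{v_1}$, so the hypothesis that it lies in the boundary line $\{x_1 = c\}$ forces this tangent to be vertical. Vertical tangency to a circle occurs only at its leftmost point (the $(4,\frac{\pi}{4})$-pole) and its rightmost point (the $(0,\frac{\pi}{4})$-pole). To rule out the $(0,\frac{\pi}{4})$-pole, note that at the rightmost point a punctured neighborhood on the circle lies strictly in $\{x_1 < c\}$; since this neighborhood is part of $\partial \overline{D_{\mathcal{S}}}$, the closure $\overline{D_{\mathcal{S}}}$ would accumulate into $\{x_1 < c\}$ arbitrarily close to $x_{v_1}$. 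But with $v_1$ of degree $1$ and both segments in $\{x_1 \geq c\}$, the single edge incident to $v_1$ departs on the side $\{x_1 > c\}$; any branch of $\overline{D_{\mathcal{S}}}$ on the other side near $x_{v_1}$ would produce a second incident edge, contradicting degree $1$. Hence $x_{v_1}$ must be the $(4,\frac{\pi}{4})$-pole, at which the circle bends naturally into $\{x_1 \geq c\}$.

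For the second claim, the transversality clause of Definition~\ref{def:2} gives that the tangent spaces of $S_{v_{1,j''}}$ and $S_{v_{1,j'''}}$ at the common point $x_{v_1}$ span $T_{x_{v_1}} \mathbb{R}^2$ and are thus distinct lines; since the tangent of $S_{v_{1,j''}}$ is vertical, that of $S_{v_{1,j'''}}$ is not, and in particular $x_{v_1}$ is neither the $(0,\frac{\pi}{4})$- nor the $(4,\frac{\pi}{4})$-pole of $S_{v_{1,j'''}}$. The main obstacle will be the intermediate step in the first claim: rigorously translating the combinatorial degree-$1$ condition on $v_1$ into the local geometric assertion that $\overline{D_{\mathcal{S}}}$ is confined to $\{x_1 \geq c\}$ in some neighborhood of $x_{v_1}$. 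This uses the description of the Poincar\'e-Reeb graph as the quotient of $\overline{D_{\mathcal{S}}}$ by connected components of the fibers of $\pi_{2,1,1}\mid_{\overline{D_{\mathcal{S}}}}$, so that every connected branch of $\overline{D_{\mathcal{S}}}$ leaving $x_{v_1}$ on a given side of $\{x_1 = c\}$ produces a distinct incident edge; the degree-$1$ hypothesis then forces all such branches onto the single side $\{x_1 \geq c\}$.
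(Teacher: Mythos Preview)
Your argument is correct and is exactly the natural elementary one; the paper itself offers no proof beyond the remark that the statement follows ``easily,'' so there is nothing substantive to compare against. One small imprecision worth tightening: when you rule out the $(0,\frac{\pi}{4})$-pole, only \emph{one} of the two arcs of $S_{v_{1,j''}}$ emanating from $x_{v_1}$ belongs to $\partial\overline{D_{\mathcal{S}}}$, not the entire punctured neighborhood on the circle. This does not affect the conclusion, since at the rightmost point both arcs lie in $\{x_1<c\}$, so whichever arc is in $\partial\overline{D_{\mathcal{S}}}$ still forces $\overline{D_{\mathcal{S}}}$ into $\{x_1<c\}$ and yields the contradiction with degree~$1$. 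Also, the ``main obstacle'' you flag at the end is not really an obstacle here: the sentence immediately preceding Proposition~\ref{prop:2} in the paper already records that both segments must lie in one closed half-plane (this is precisely the degree-$1$ input), so you may simply cite that rather than re-derive it.
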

The segments must be in the (closed) region of the form $\{(x_1,x_2) \mid x_2 \geq {\rm (}\leq {\rm )}\ {\pi}_{2,1,2}(x_{v_1}) \}$ and at most one of the two segments may be in the boundary of the closed region in the case $i=2$.  We have the following in addition easily.
\begin{Prop}
\label{prop:3}
 In the case $i=2$ with the property that one of the two segments is in the boundary of the closed region $\{(x_1,x_2) \mid x_2 \geq {\rm (}\leq {\rm )}\ {\pi}_{2,1,2}(x_{v_1}) \}$ and that the two segments are in the closed region, the point $x_{v_1}$ must be the $(6,\frac{\pi}{4})$-pole {\rm (}resp. $(2,\frac{\pi}{4})$-pole{\rm )} of the circle $S_{v_{1,j^{\prime \prime}}}$ having the segment as a subset of the tangent line at $x_{v_1}$ and must not be the $(j,\frac{\pi}{4})$-pole of the circle $S_{v_{1,j^{\prime \prime \prime}}}$ having the remaining segment as a subset of the tangent line at $x_{v_1}$ with $j=2,6$. 
\end{Prop}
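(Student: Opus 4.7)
The plan is to mirror the proof of Proposition \ref{prop:2}, substituting horizontal tangencies for vertical ones since we are now working with the second projection $\pi_{2,1,2}$. The hypothesis supplies two pieces of geometric data at $x_{v_1}$, and I would extract the two conclusions one at a time.

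First I would unpack the hypothesis. By construction, the segment associated with $S_{v_{1,j^{\prime \prime}}}$ lies on the tangent line to $S_{v_{1,j^{\prime \prime}}}$ at $x_{v_1}$. The assertion that this segment sits on the boundary of the closed half-plane $\{(x_1,x_2)\mid x_2 \geq \pi_{2,1,2}(x_{v_1})\}$ (resp.\ $\{(x_1,x_2)\mid x_2 \leq \pi_{2,1,2}(x_{v_1})\}$) is equivalent to the statement that this tangent line is horizontal. A circle $S_{x_0,r}$ has a horizontal tangent line at exactly two points, its topmost and its bottommost, and according to Definition \ref{def:1} these are the $(2,\frac{\pi}{4})$-pole and the $(6,\frac{\pi}{4})$-pole, respectively.

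Next I would use the side condition to pick between these two options. The arc of $S_{v_{1,j^{\prime \prime}}}$ near $x_{v_1}$ must lie in the same closed half-plane as its tangent segment, and it agrees with its tangent line to first order; the second-order behavior then forces $S_{v_{1,j^{\prime \prime}}}$ to curve upward (resp.\ downward) from $x_{v_1}$. Equivalently $x_{v_1}$ is the lowest (resp.\ highest) point of $S_{v_{1,j^{\prime \prime}}}$, so $x_{v_1}$ is the $(6,\frac{\pi}{4})$-pole (resp.\ $(2,\frac{\pi}{4})$-pole) of $S_{v_{1,j^{\prime \prime}}}$.

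Finally I would invoke the transversality rule of Definition \ref{def:2}: at any point where two distinct circles of $\mathcal{S}$ meet, the two tangent vector spaces sum to $T_{x_{v_1}}{\mathbb{R}}^2$, so in particular the two tangent lines are distinct. Since the tangent line of $S_{v_{1,j^{\prime \prime}}}$ at $x_{v_1}$ is horizontal, that of $S_{v_{1,j^{\prime \prime \prime}}}$ at $x_{v_1}$ cannot be horizontal, and therefore $x_{v_1}$ cannot be a $(j,\frac{\pi}{4})$-pole of $S_{v_{1,j^{\prime \prime \prime}}}$ for $j=2,6$. The only mildly delicate step, exactly as in Proposition \ref{prop:2}, is verifying that the arc itself (not merely its tangent segment) stays in the prescribed half-plane so that the curvature argument goes through; this is forced by the hypothesis that $v_1$ has degree $1$ together with the standing transversality assumption on the NI arrangement.
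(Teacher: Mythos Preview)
Your proof is correct and supplies exactly the elementary reasoning the paper omits (the paper gives no argument beyond ``We have the following in addition easily''). The three steps you outline---horizontal tangency forces $x_{v_1}$ to be a $(2,\frac{\pi}{4})$- or $(6,\frac{\pi}{4})$-pole of $S_{v_{1,j''}}$, the degree-$1$ hypothesis pins down which half-plane $\overline{D_{\mathcal{S}}}$ (and hence the boundary arc of $S_{v_{1,j''}}$) lies in so as to select the correct pole, and the transversality clause of Definition~\ref{def:2} prevents $S_{v_{1,j'''}}$ from also having horizontal tangent there---are precisely the intended justification.
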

We discuss the general case again and abuse the previously used notation. We orient the segments as ones departing from the point $x_{v_0}$ to $x_{v_0,j^{\prime}}$ and we call such a pair of the segments a {\it tangent segment pair at} 
$(\overline{D_{\mathcal{S}}}, x_{v_0})$. We can represent the orientations of the segments by the form $(t_{v_0,1,1},t_{v_0,1,2})$ and $(t_{v_0,2,1},t_{v_0,2,1})$ respectively: each $(t_{v_0,j^{\prime},1},t_{v_0,j^{\prime},2})$ of the vectors corresponds to the tangent line at the point of the circle $S_{v_{0,j^{\prime}}}$. The signs of $t_{v_0,j^{\prime},1}$ and $t_{v_0,j^{\prime},2}$ are independent of tangent segment pairs at $(\overline{D_{\mathcal{S}}}, x_{v_0})$ (in the case the value of the component is $0$ for some tangent segment pair at $(\overline{D_{\mathcal{S}}}, x_{v_0})$ the values of the component are always $0$).

We discuss the present case again. We use the notation $v_1$ for the vertex and $x_{v_1}$ for the point of the one-point set again. We also abuse the notation in the previous paragraph.
\begin{Prop}
\label{prop:4}
In the case where $x_{v_1}$ is in exactly two circles $S_{v_{1,1}}$ and $S_{v_{1,2}}$ from $\mathcal{S}$, for the point the relation $t_{v_1,1,1}t_{v_1,2,1} \geq 0$ holds in the case $i=1$ and the relation $t_{v_1,1,2}t_{v_1,2,2} \geq 0$ holds in the case $i=2$. Furthermore, in this case, we have the following.
\begin{enumerate}
\item In the case $t_{v_1,1,1}t_{v_1,2,1}=0$ with $i=1$, at least one of  $t_{v_1,1,1}$ and $t_{v_1,2,1}$ is a non-zero number. For example, in the case $t_{v_1,1,1}>0$, $x_{v_1}$ must not be the $(j,\frac{\pi}{4})$-pole of the circle $S_{v_{1,1}}$ with $j=0,4$ and must be the $(4,\frac{\pi}{4})$-pole of the circle $S_{v_{1,2}}$ and in the case $t_{v_1,1,1}<0$, $x_{v_1}$ must not be the $(j,\frac{\pi}{4})$-pole of the circle $S_{v_{1,1}}$ with $j=0,4$ and must be the $(0,\frac{\pi}{4})$-pole of the circle $S_{v_{1,2}}$. 
\item In the case $t_{v_1,1,2}t_{v_1,2,2}=0$ with $i=2$, at least one of $t_{v_1,1,2}$ and $t_{v_1,2,2}$ is a non-zero number. For example, in the case $t_{v_1,1,2}>0$, $x_{v_1}$ must not be the $(j,\frac{\pi}{4})$-pole of the circle $S_{v_{1,1}}$ with $j=2,6$ and must be the $(6,\frac{\pi}{4})$-pole of the circle $S_{v_{1,2}}$ and in the case $t_{v_1,1,2}<0$, $x_{v_1}$ must not be the $(j,\frac{\pi}{4})$-pole of the circle $S_{v_{1,1}}$ with $j=2,6$ and must be the $(2,\frac{\pi}{4})$-pole of the circle $S_{v_{1,2}}$. 
\end{enumerate}
\end{Prop}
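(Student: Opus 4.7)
The plan is to exploit the fact that $v_1$ being a degree-$1$ vertex of the Poincaré–Reeb V-digraph forces a one-sided local structure on $\overline{D_{\mathcal{S}}}$ at $x_{v_1}$, and then to read off the signs of the tangent components from this geometry. Concretely, since $v_1$ has exactly one incident edge, the local trivial-bundle picture reviewed just before the definition of the V-digraph tells us that $\pi_{2,1,i}|_{\overline{D_{\mathcal{S}}}}$ attains a local extremum at $x_{v_1}$: there is a neighborhood $U$ of $x_{v_1}$ in ${\mathbb{R}}^2$ with $U \cap \overline{D_{\mathcal{S}}}$ contained in one of the two closed half-planes $\{x_i \geq \pi_{2,1,i}(x_{v_1})\}$ or $\{x_i \leq \pi_{2,1,i}(x_{v_1})\}$.

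Choosing $\epsilon > 0$ so small that $S_{x_{v_1},\epsilon} \subset U$, each point $x_{v_1, j'}$ is defined to lie in, or to be closer to, $\overline{D_{\mathcal{S}}} \cap S_{x_{v_1},\epsilon}$, which forces $x_{v_1, j'}$ into the same closed half-plane. The vector $x_{v_1, j'} - x_{v_1}$ is a positive scalar multiple of $(t_{v_1, j', 1}, t_{v_1, j', 2})$, so its $i$-th coordinate carries the same sign as $t_{v_1, j', i}$. Both tangent segment endpoints lying in a common closed half-plane through $x_{v_1}$ then yields $t_{v_1, 1, i} \cdot t_{v_1, 2, i} \geq 0$, which is the main inequality.

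For the zero-product case, I would first rule out the possibility that both $t_{v_1, 1, i}$ and $t_{v_1, 2, i}$ vanish: in that scenario the tangent lines of $S_{v_{1,1}}$ and $S_{v_{1,2}}$ at $x_{v_1}$ would both be perpendicular to the $i$-th axis and hence coincide, contradicting the transversality clause in Definition~\ref{def:2} demanding that the two tangent spaces sum to $T_{x_{v_1}}{\mathbb{R}}^2$. Assuming then $t_{v_1, 1, i} \neq 0$ and $t_{v_1, 2, i} = 0$, the non-vanishing of $t_{v_1, 1, i}$ directly forbids $x_{v_1}$ from being a $(j, \pi/4)$-pole of $S_{v_{1,1}}$ with $j = 0, 4$ (for $i=1$) or $j = 2, 6$ (for $i=2$), since those poles are exactly the ones with tangent perpendicular to the $i$-th axis. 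Symmetrically, the vanishing of $t_{v_1, 2, i}$ forces $x_{v_1}$ to be one of those two perpendicular-tangent poles of $S_{v_{1,2}}$, and the sign of $t_{v_1, 1, i}$ together with the half-plane identified in the first step pins down which one. For instance in case $i = 1$ with $t_{v_1, 1, 1} > 0$, the region lies locally to the right of $x_{v_1}$, so $x_{v_1}$ must be the leftmost point of $S_{v_{1,2}}$, namely its $(4, \pi/4)$-pole; the three remaining cases are parallel.

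The hard part will be justifying the local half-plane containment cleanly from the degree-$1$ condition. This rests on the local model near a degree-$1$ vertex of the Poincaré–Reeb graph reviewed informally in the excerpt and worked out in \cite{kitazawa3}; once that local model is in hand, everything else reduces to elementary plane geometry together with the NI transversality condition already codified in Definition~\ref{def:2}, so I do not anticipate further serious obstacles.
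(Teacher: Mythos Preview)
Your proposal is correct and follows essentially the same approach as the paper: the paper does not give an explicit proof of Proposition~\ref{prop:4} but asserts just before Propositions~\ref{prop:2}--\ref{prop:4} that both tangent segments lie in a common closed half-plane $\{x_i \geq (\leq)\ \pi_{2,1,i}(x_{v_1})\}$ with at most one on the boundary, and then records the proposition as an easy consequence. Your half-plane containment argument from the degree-$1$ local model, the transversality appeal to Definition~\ref{def:2} to exclude both components vanishing, and the pole identification from the sign all match this implicit reasoning.
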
 
\noindent Case 2  For vertices of degree $2$. \\
For each vertex $v_2$ of degree $2$ of the Poincar\'e-Reeb V-digraph of $(\mathcal{S}, D_{\mathcal{S}})$ for ${\pi}_{2,1,i}$, we investigate the resulting set ${(q_{D_{\mathcal{S}},i})}^{-1}(v_2)$. 

We can have the following by a small and routine argument.
\begin{Prop}
\label{prop:5}
The intersection ${(q_{D_{\mathcal{S}},i})}^{-1}(v_2) \bigcap \partial \overline{D_{\mathcal{S}}}$ is a two-point set.
\begin{enumerate}

\item Each point must be contained in exactly one or exactly two circles from $\mathcal{S}$. 
\item In the case a point of ${(q_{D_{\mathcal{S}},i})}^{-1}(v_2) \bigcap \partial \overline{D_{\mathcal{S}}}$ is in exactly one circle from $\mathcal{S}$, this must not be the $(j,\frac{\pi}{4})$-pole of the circle of $\mathcal{S}$ with $j=0, 4$ in the case $i=1$ {\rm (}resp. $j=2,6$ in the case $i=2${\rm )}. 
\item In addition, at least one point of ${(q_{D_{\mathcal{S}},i})}^{-1}(v_2) \bigcap \partial \overline{D_{\mathcal{S}}}$ must be contained in exactly two circles from $\mathcal{S}$.
\end{enumerate}
\end{Prop}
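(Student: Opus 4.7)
The plan is to analyze the preimage $F := (q_{D_{\mathcal{S}},i})^{-1}(v_2) \subset \overline{D_{\mathcal{S}}}$ using the trivial $D^1$-bundle structure of ${\pi}_{2,1,i}|_{D_{\mathcal{S}}}$ over edges of $G_{D_{\mathcal{S}},i}$ recalled at the start of Section \ref{sec:3}. Since $v_2$ has degree $2$, the $D^1$-fibers over the two incident edges converge to a common limit $F$, so $F$ is a non-degenerate closed interval contained in the line $\{x_i = c\}$ with $c = V_{D_{\mathcal{S}},i}(v_2)$, and its two endpoints automatically lie on $\partial \overline{D_{\mathcal{S}}}$. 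The only further candidates for points of $F \cap \partial \overline{D_{\mathcal{S}}}$ would be interior tangencies of the line $\{x_i = c\}$ to some circle of $\mathcal{S}$, namely $(j,\pi/4)$-poles with $j \in \{0,4\}$ (case $i=1$) or $j \in \{2,6\}$ (case $i=2$). A local model (the circle as a parabola near such a pole) shows that at such an interior tangency the nearby fiber splits into two components on one side of $c$, so the degree of $v_2$ would be at least $3$, contradicting the hypothesis. Thus $F \cap \partial \overline{D_{\mathcal{S}}}$ consists of exactly two points. Assertion (1) then follows from the NI-arrangement prohibition of triple intersections in $\overline{D_{\mathcal{S}}}$ (Definition \ref{def:2}).

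For assertion (2), suppose an endpoint $p$ of $F$ lies on a single circle $S \in \mathcal{S}$ and is a $(j,\pi/4)$-pole of the relevant type. The tangent line of $S$ at $p$ is then $\{x_i = c\}$, and since $p$ lies on only one circle, locally $\overline{D_{\mathcal{S}}}$ sits on a single side of this tangent line. If $\overline{D_{\mathcal{S}}}$ is on the outside of $S$ near $p$, the local parabolic model shows that $p$ is an interior tangency, which is already ruled out by the previous paragraph. If instead $\overline{D_{\mathcal{S}}}$ is on the inside of $S$ near $p$, then the line $\{x_i = c\}$ meets $\overline{D_{\mathcal{S}}}$ only at $p$ in a neighborhood, so the component $F$ of the fiber containing $p$ is isolated at $p$ and hence $F = \{p\}$, contradicting that $F$ is a non-degenerate interval. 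Hence no endpoint of $F$ is a relevant pole on a single circle, which is assertion (2).

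For assertion (3), suppose both endpoints $p_1, p_2$ of $F$ lie on a single circle each. By (2) neither is a relevant pole, so the boundary arcs through $p_1, p_2$ are transverse to $\{x_i = c\}$. Under these conditions the trivial $D^1$-bundle structure on the edges incident to $v_2$ extends across $v_2$ itself, so $v_2$ would lie in the interior of an edge rather than being a vertex of $G_{D_{\mathcal{S}},i}$, a contradiction. Hence at least one endpoint of $F$ must be a double point, proving (3). The main obstacle is the local geometric bookkeeping at each endpoint: one must correctly identify on which side of each boundary arc $\overline{D_{\mathcal{S}}}$ lies (as encoded by the choice of $E_{x_j,r_j}$ in Definition \ref{def:2}) and relate this to the local behaviour of the projection. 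These arguments are elementary plane-geometric case analyses, in the spirit of the tangent segment pair formalism developed for Proposition \ref{prop:4}.
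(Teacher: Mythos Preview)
The paper does not supply a proof of Proposition~\ref{prop:5}; it merely remarks that the statement follows ``by a small and routine argument.'' Your proposal correctly supplies the details of such an argument, and the overall approach---analyzing the interval $F$ and its two endpoints via the local fiber structure---is exactly the kind of routine verification the paper has in mind.

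Two small sharpenings. For part~(3), rather than arguing that the $D^1$-bundle structure extends across $v_2$, you can appeal directly to the definition of the vertex set given at the start of Section~\ref{sec:3}: a point of the quotient is declared a vertex precisely when the corresponding fiber component contains either a $(j,\frac{\pi}{4})$-pole of the relevant type or a point lying on two circles of $\mathcal{S}$. Under your hypothesis for~(3), together with~(2) and the already-established fact that $F\cap\partial\overline{D_{\mathcal{S}}}$ consists only of the two endpoints of $F$, the fiber $F$ contains neither kind of special point, so $v_2$ fails the defining condition for a vertex outright. This is marginally cleaner than the bundle-extension phrasing. Second, in your first paragraph you assert without comment that an interior boundary point of $F$ must be a tangency to a single circle rather than a double point; this is true---a line through a transverse intersection of two circles passes from one sector into an adjacent or opposite one and so cannot remain in $\overline{D_{\mathcal{S}}}$ on both sides of the point---but it is worth one sentence, since the same sector reasoning underlies the tangent-segment-pair formalism you invoke later.
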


\begin{Prop}
\label{prop:6}
For a point in the two-point set  ${(q_{D_{\mathcal{S}},i})}^{-1}(v_2) \bigcap \partial \overline{D_{\mathcal{S}}}$ in exactly two circles $S_{v_{2,1}}$ and $S_{v_{2,2}}$ from $\mathcal{S}$, the following hold. We use $x_{v_2}$ for the point, We abuse the notation of this subsection.
\begin{enumerate}
\item The relation $t_{v_2,1,1}t_{v_2,2,1} \leq 0$ holds in the case $i=1$ and the relation $t_{v_2,1,2}t_{v_2,2,2} \leq 0$ holds in the case $i=2$. 
\item In the case $t_{v_2,1,1}t_{v_2,2,1}=0$ with $i=1$, at least one of  $t_{v_2,1,1}$ and $t_{v_2,2,1}$ is a non-zero number. For example, in the case $t_{v_2,1,1}>0$, $x_{v_2}$ must not be the $(j,\frac{\pi}{4})$-pole of the circle $S_{v_{2,1}}$ with $j=0,4$ and $x_{v_2}$ must be the $(0,\frac{\pi}{4})$-pole of the circle $S_{v_{2,2}}$ and in the case $t_{v_2,1,1}<0$, $x_{v_2}$ must not be the $(j,\frac{\pi}{4})$-pole of the circle $S_{v_{2,1}}$ with $j=0,4$ and $x_{v_2}$ must be the $(4,\frac{\pi}{4})$-pole of the circle $S_{v_{2,2}}$. 
\item In the case $t_{v_2,1,2}t_{v_2,2,2}=0$ with $i=2$, at least one of  $t_{v_2,1,2}$ and $t_{v_2,2,2}$ is a non-zero number. For example, in the case $t_{v_2,1,2}>0$, $x_{v_2}$ must not be the $(j,\frac{\pi}{4})$-pole of the circle $S_{v_{2,1}}$ with $j=2,6$ and $x_{v_2}$ must be the $(2,\frac{\pi}{4})$-pole of the circle $S_{v_{2,2}}$ and in the case $t_{v_2,1,2}<0$, $x_{v_2}$ must not be the $(j,\frac{\pi}{4})$-pole of the circle $S_{v_{2,1}}$ with $j=2,6$ and $x_{v_2}$ must be the $(6,\frac{\pi}{4})$-pole of the circle $S_{v_{2,2}}$. 
\end{enumerate}
\end{Prop}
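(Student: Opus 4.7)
The plan is to exploit the transversal intersection structure of two circles at $x_{v_2}$ together with the hypothesis that $v_2$ has degree $2$. The starting observation is that the tangent segment at $x_{v_2}$ along $S_{v_{2,k}}$ ($k=1,2$) agrees to first order with the unit tangent direction of the boundary arc of $\overline{D_{\mathcal{S}}}$ coming from $S_{v_{2,k}}$; in particular, the sign of $t_{v_2,k,1}$ (case $i=1$) records whether that boundary arc initially moves with $x_1$ increasing or decreasing, and analogously for $i=2$. A degree $2$ vertex corresponds to a ``through'' level: the preimage $(q_{D_{\mathcal{S}},i})^{-1}(v_2)$ is an arc whose endpoints lie on $\partial \overline{D_{\mathcal{S}}}$, and for both edges of $G_{D_{\mathcal{S}},i}$ incident to $v_2$ to exist, the region $\overline{D_{\mathcal{S}}}$ must extend to both sides of the line $\{{\pi}_{2,1,i}={\pi}_{2,1,i}(x_{v_2})\}$ near $x_{v_2}$. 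Since the local picture of $\overline{D_{\mathcal{S}}}$ near $x_{v_2}$ is a wedge bounded by the two boundary arcs, the two boundary tangents cannot both point strictly to the right or strictly to the left (case $i=1$), yielding $t_{v_2,1,1}t_{v_2,2,1}\leq 0$; the case $i=2$ is the vertical/horizontal analogue.

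For parts (2) and (3), suppose $t_{v_2,1,1}t_{v_2,2,1} = 0$ with $i=1$. First, the two components cannot simultaneously vanish: that would make both tangent lines of $S_{v_{2,1}}$ and $S_{v_{2,2}}$ at $x_{v_2}$ vertical, hence equal, contradicting the transverse intersection condition imposed by the NI arrangement definition. So exactly one component, say $t_{v_2,2,1}=0$, is zero, and then the tangent line of $S_{v_{2,2}}$ at $x_{v_2}$ is vertical, forcing $x_{v_2}$ to be a $(0,\frac{\pi}{4})$- or a $(4,\frac{\pi}{4})$-pole of $S_{v_{2,2}}$. To distinguish the two, I would use that $S_{v_{2,2}}$ attains its maximum $x_1$-coordinate at the $(0,\frac{\pi}{4})$-pole and its minimum $x_1$-coordinate at the $(4,\frac{\pi}{4})$-pole, so that the boundary arc of $\overline{D_{\mathcal{S}}}$ along $S_{v_{2,2}}$ leaving $x_{v_2}$ moves (to second order) to the left in the first case and to the right in the second. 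Combined with the degree $2$ condition from part (1), which demands this arc point opposite to the $S_{v_{2,1}}$ arc, the sign of $t_{v_2,1,1}$ uniquely determines the pole type as stated. The assertion that $x_{v_2}$ is not a $(0,\frac{\pi}{4})$- or $(4,\frac{\pi}{4})$-pole of $S_{v_{2,1}}$ is immediate from $t_{v_2,1,1}\neq 0$. The case $i=2$ is obtained by swapping the roles of the two coordinates.

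The main obstacle I anticipate is rigorously upgrading the degree $2$ condition, which is globally combinatorial, to the local statement that the wedge at $x_{v_2}$ straddles the vertical line through $x_{v_2}$. One could a priori imagine the wedge at $x_{v_2}$ lying entirely on one side while the other boundary point in $(q_{D_{\mathcal{S}},i})^{-1}(v_2) \cap \partial \overline{D_{\mathcal{S}}}$ compensates. I would rule this out using the (relative) Ehresmann fibration structure recalled in Section~\ref{sec:3}: as the level ${\pi}_{2,1,1}$ varies near ${\pi}_{2,1,1}(x_{v_2})$, each endpoint of the preimage arc must trace a smooth curve along $\partial \overline{D_{\mathcal{S}}}$, and each endpoint can only continue to the $x_1$-side toward which its local wedge opens; if one endpoint's wedge were one-sided, one of the two nearby levels would lose that endpoint, leaving only one edge at $v_2$ in $G_{D_{\mathcal{S}},i}$, contradicting $\deg v_2 = 2$.
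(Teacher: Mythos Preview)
Your argument is correct and is essentially the routine plane--geometric reasoning the paper has in mind: the paper gives no explicit proof of this proposition, grouping it with Proposition~\ref{prop:5} as following ``by a small and routine argument'' and merely inviting comparison with Proposition~\ref{prop:4}. Your tangent--segment analysis at the double point, the transversality argument excluding $(t_{v_2,1,1},t_{v_2,2,1})=(0,0)$, and the pole identification via the extremal $x_1$--value are exactly the intended ingredients.

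The obstacle you flag in your last paragraph dissolves more directly than your Ehresmann argument, and you can tighten that part. If both tangent segments at $x_{v_2}$ had strictly positive first component, then the convex wedge they span, which by the paper's earlier remark has opening angle strictly between $0$ and $\pi$, lies entirely in the half--plane $\{x_1 \geq \pi_{2,1,1}(x_{v_2})\}$; since each boundary arc of $\overline{D_{\mathcal{S}}}$ at $x_{v_2}$ is tangent to one of these segments with nonvertical tangent, the local region $\overline{D_{\mathcal{S}}}$ near $x_{v_2}$ also lies in that half--plane and meets the vertical fiber $\{x_1=\pi_{2,1,1}(x_{v_2})\}$ only at $x_{v_2}$. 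Thus $x_{v_2}$ is isolated in its level set, the connected component $(q_{D_{\mathcal{S}},i})^{-1}(v_2)$ is the singleton $\{x_{v_2}\}$, and this already contradicts Proposition~\ref{prop:5} (two boundary points) and forces degree $1$. There is therefore no scenario in which ``the other boundary point compensates'': the hypothetical you worry about is vacuous, and no fibration argument is needed. The same local observation handles the borderline case in part~(2): if $t_{v_2,1,1}>0$, $t_{v_2,2,1}=0$, and $x_{v_2}$ were the $(4,\frac{\pi}{4})$--pole of $S_{v_{2,2}}$, the entire circle $S_{v_{2,2}}$ would sit in $\{x_1\geq \pi_{2,1,1}(x_{v_2})\}$, so both boundary arcs would again lie in that closed half--plane near $x_{v_2}$, isolating $x_{v_2}$ and giving the same contradiction.
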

\noindent Compare Proposition \ref{prop:6} to Proposition \ref{prop:4}, especially. \\
\ \\
\noindent Case 3  For remaining vertices.\\
For each vertex $v$ of degree at least $3$ of the Poincar\'e-Reeb V-digraph of $(\mathcal{S}, D_{\mathcal{S}})$ for ${\pi}_{2,1,i}$, we investigate the resulting set ${(q_{D_{\mathcal{S}},i})}^{-1}(v)$.  
Let $k_{v,+}$ ($k_{v,-}$) be the number of all oriented edges of the V-digraph departing from (resp.  entering) $v$. These numbers must be positive integers at least one of which must be greater than $1$ by the condition of the degree of the vertex. We present the following proposition with the word "convexity", important in the proof. We can also check this proposition immediately.  
\begin{Prop}
\label{prop:7}
The preimage
${(q_{D_{\mathcal{S}},i})}^{-1}(v)$ is diffeomorphic to $D^1$. 

First, the two points of the boundary are also in $\partial \overline{D_{\mathcal{S}}}$ and points as in Propositions \ref{prop:5} and \ref{prop:6} by considering the convexity of $\overline{D_{\mathcal{S}}}$ and both of them may be contained in single circles.

The intersection ${(q_{D_{\mathcal{S}},i})}^{-1}(v) \bigcap \partial \overline{D_{\mathcal{S}}}$ is a finite set consisting of exactly $k_{v,+}+k_{v,-}>2$ points. 
We concentrate on these points except the two points which are previously discussed.
By considering the convexity of $\overline{D_{\mathcal{S}}}$, $k_{v,+}-1$ points are $(4,\frac{\pi}{4})$-poles {\rm (}$(6,\frac{\pi}{4})$-poles{\rm )} of circles of $mathcal{S}$, $k_{v,-}-1$ points are $(0,\frac{\pi}{4})$-poles  {\rm (}$(2,\frac{\pi}{4})$-poles{\rm )} of circles of $\mathcal{S}$, and these $k_{v,+}+k_{v,-}-2$ circles are mutually distinct in the case $i=1$ {\rm (}$i=2${\rm )}.
\end{Prop}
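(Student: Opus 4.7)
The plan is to first pin down the shape of $(q_{D_{\mathcal{S}},i})^{-1}(v)$ and then to locate and classify every point at which it meets $\partial\overline{D_{\mathcal{S}}}$. I treat the case $i=1$; the case $i=2$ is obtained by swapping $x_1$ with $x_2$ and the poles $(0,\tfrac{\pi}{4}),(4,\tfrac{\pi}{4})$ with $(2,\tfrac{\pi}{4}),(6,\tfrac{\pi}{4})$. The preimage is a connected component of the compact planar set $\overline{D_{\mathcal{S}}}\cap\{x_1=c\}$ with $c=V_{D_{\mathcal{S}},1}(v)$, and hence is either a one-point set or a closed interval. A one-point preimage corresponds to a degree-$1$ vertex by Case $1$, so under the hypothesis $\deg v\geq 3$ I obtain $(q_{D_{\mathcal{S}},i})^{-1}(v)=[a,b]\cong D^1$. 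The endpoints $a$ and $b$ must lie in $\partial\overline{D_{\mathcal{S}}}$, since otherwise the interval could be extended inside $\overline{D_{\mathcal{S}}}$. A local analysis identical to the one performed in Case $2$ classifies each endpoint either as contained in a unique circle of $\mathcal{S}$ away from its $(0,\tfrac{\pi}{4})$- and $(4,\tfrac{\pi}{4})$-poles (such a pole would collapse the fiber to a point) or as contained in exactly two circles satisfying the tangent segment pair conditions of Proposition \ref{prop:6}; nothing here rules out the possibility that both endpoints lie on single circles.

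Next I would show that every interior point $p$ of $[a,b]$ in $\partial\overline{D_{\mathcal{S}}}$ is either a $(0,\tfrac{\pi}{4})$- or a $(4,\tfrac{\pi}{4})$-pole of some single circle $S\in\mathcal{S}$. If $p$ lay on two circles, the NI condition requires their tangents at $p$ to span ${\mathbb{R}}^2$, so at least one crosses $\{x_1=c\}$ transversely; combined with the sidedness of $\overline{D_{\mathcal{S}}}$ along that circle near $p$, this forces $p$ to be an endpoint rather than an interior point. Likewise, a unique circle with non-vertical tangent at $p$ would cross transversely and make $p$ an endpoint. Hence the tangent at $p$ is vertical, which is exactly the pole condition; and since the disk bounded by $S$ is convex, the local fiber structure near $p$ is determined: a $(4,\tfrac{\pi}{4})$-pole \emph{splits} the adjacent fiber component when $x_1$ increases past $c$ (the disk opens to the right), whereas a $(0,\tfrac{\pi}{4})$-pole \emph{merges} two adjacent components.

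Finally, the Ehresmann fibration theorem applied over the open edges adjacent to $v$ yields $k_{v,+}$ connected fiber components immediately to the right of $c$ and $k_{v,-}$ immediately to the left. Since the fiber at $c$ is a single connected interval, exactly $k_{v,+}-1$ splittings and $k_{v,-}-1$ mergings are necessary, producing $k_{v,+}-1$ interior $(4,\tfrac{\pi}{4})$-poles and $k_{v,-}-1$ interior $(0,\tfrac{\pi}{4})$-poles; adding the two endpoints gives $k_{v,+}+k_{v,-}$ total boundary points. Mutual distinctness of the $k_{v,+}+k_{v,-}-2$ underlying circles is automatic: a circle of radius $r$ centred at $(x_{0,1},x_{0,2})$ has its $(0,\tfrac{\pi}{4})$- and $(4,\tfrac{\pi}{4})$-poles on the vertical lines $x_1=x_{0,1}\pm r$, which differ by $2r>0$, so at most one such pole of any given circle can lie on $\{x_1=c\}$. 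The main obstacle is the local analysis in the second paragraph, particularly ruling out two-circle intersections at interior points and correctly translating the NI construction's choice of $E_{x_{j^{\prime}},r_{j^{\prime}}}$ into the split-versus-merge direction; once this is secured, the count is a routine consequence of the Ehresmann trivialization.
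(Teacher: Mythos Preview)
Your argument is correct and follows precisely the route the paper gestures at: the paper offers no detailed proof of Proposition~\ref{prop:7}, merely remarking that it ``can be checked immediately'' with the hint ``convexity,'' and your write-up supplies exactly those details---identifying the fiber as a vertical closed interval, classifying interior boundary points as $(0,\tfrac{\pi}{4})$- or $(4,\tfrac{\pi}{4})$-poles via the tangent/transversality dichotomy and the local convexity of the bounding disks, and then reading off the split/merge count from the Ehresmann trivialization over adjacent edges. The distinctness argument (the two horizontal poles of a circle lie on vertical lines $2r$ apart) is the natural one and is implicit in the paper's setup.
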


 
\subsubsection{Our summary on the labels.}
By remembering Propositions of this subsection and surrounding arguments, we can check the following and we do not present its proof explicitly.
\begin{Thm}
\label{thm:1}
For the Poincar\'e-Reeb {\rm (}V-di{\rm )}graph of $(\mathcal{S},D_{\mathcal{S}})$ for ${\pi}_{2,1,i}$, we can define a map $l_{(\mathcal{S},D_{\mathcal{S}}),i}$ on the disjoint union of the vertex set and the edge set of this whose value at each element $s$ is a sequence of a finite length
of finite sets of $(j,j+1) \frac{\pi}{4}$-arcs intersecting ${q_{D_{\mathcal{S}},i}}^{-1}(s)$, and of some circles of $\mathcal{S}$, enjoying the following.
\begin{enumerate}
\item \label{thm:1.1} For each edge $e$, $l_{(\mathcal{S},D_{\mathcal{S}}),i}(e)$ is a sequence of length $2$ of finite sets whose sizes are at most $4$ of $(j,j+1) \frac{\pi}{4}$-arcs of some circles of $\mathcal{S}$ intersecting ${q_{D_{\mathcal{S}},i}}^{-1}(e)$. For each finite set, the circle containing the arcs is unique and let $S_{e,1}$ denote the circle containing the arcs in the first set and $S_{e,2}$ the circle containing the arcs in the second set with $S_{e,1}=S_{e,2}$ or $S_{e,2}$ being above {\rm (}in the right of {\rm )} $S_{e,1}$ in the case $i=1$ {\rm (}resp. $i=2${\rm )}. Here we have adopted the notation before.

In the case $i=1$, the union of all arcs of each finite set must be a connected smooth curve and the finite set must not contain the $(3,4) \frac{\pi}{4}$-arc and the $(4,5) \frac{\pi}{4}$-arc simultaneously or the $(7,8) \frac{\pi}{4}$-arc and the $(0,1) \frac{\pi}{4}$-arc of the uniquely defined circle simultaneously.  
In the case $i=1$ with $S_{e,1}=S_{e,2}$, the finite set of the first element must consist of  $(j,j+1) \frac{\pi}{4}$-arcs of $S_{e,1}$ with $j=4,5,6,7$ only and that of the second element must consist of  $(j,j+1) \frac{\pi}{4}$-arcs of $S_{e,2}$ with $j=0,1,2,3$ only. 
In the case $i=2$, the union of all arcs of each finite set must be a connected smooth curve  and the finite set must not contain the $(j,j+1) \frac{\pi}{4}$-arc and the $(j+1,j+2) \frac{\pi}{4}$-arc of the uniquely defined circle simultaneously for $j=1,5$. In the case $i=2$ with $S_{e,1}=S_{e,2}$, the finite set of the first element must consist of  $(j,j+1) \frac{\pi}{4}$-arcs of $S_{e,1}$ with $j=2,3,4,5$ only and that of the second element must consist of  $(j,j+1) \frac{\pi}{4}$-arcs of $S_{e,2}$ with $j=0,1,6,7$ only. 
\item  \label{thm:1.2} For each vertex $v_1$ of degree $1$, $l_{(\mathcal{S},D_{\mathcal{S}}),i}(v_1)$ is defined as in either of the following.
\begin{enumerate}
\item \label{thm:1.2.1} The set ${q_{D_{\mathcal{S}},i}}^{-1}(v_1)$ is a one-point set and contained as a subset of the unique circle of $\mathcal{S}$ and the single point is the $(j,\frac{\pi}{4})$-pole of the circle with $j=0,4$ in the case $i=1$ {\rm (}$j=2,6$ in the case $i=2${\rm )}. In this case, $l_{(\mathcal{S},D_{\mathcal{S}}),i}(v_1)$ is a sequence of length one and the finite set consists of the two $(j,j+1) \frac{\pi}{4}$-arcs of the circle of $\mathcal{S}$ containing the single point, or in other words $j=0,7$ or $j=3,4$ in the case $i=1$ and $j=1,2$ or $j=5,6$ in the case $i=2$.
\item \label{thm:1.2.2} The set ${q_{D_{\mathcal{S}},i}}^{-1}(v_1)$ is a one-point set and contained as subsets of the exactly two circles of $\mathcal{S}$, denoted by $S_{v_1,1}$ and $S_{v_1,2}$, as before. 
Here, we also use the notation before. The relation $t_{v_1,1,1}t_{v_1,2,1} \geq 0$ with $(t_{v_1,1,1},t_{v_1,2,1}) \neq (0,0)$ holds in the case $i=1$ and the relation $t_{v_1,1,2}t_{v_1,2,2} \geq 0$ with $(t_{v_1,1,2},t_{v_1,2,2}) \neq (0,0)$ holds in the case $i=2$. Either of the following cases is considered.
\begin{enumerate}
\item \label{thm:1.2.2.1} The relation $t_{v_1,1,1}t_{v_1,2,1}>0$ holds in the case $i=1$ and the relation $t_{v_1,1,2}t_{v_1,2,2}>0$ holds in the case $i=2$. Here, in the former case, $l_{(\mathcal{S},D_{\mathcal{S}}),i}(v_1)=l_{(\mathcal{S},D_{\mathcal{S}}),1}(v_1)$ is a sequence of length one and the finite set consists of one or two $(j,j+1) \frac{\pi}{4}$-arcs of the circle $S_{v_1,1}$ of $\mathcal{S}$ and one or two $(j,j+1) \frac{\pi}{4}$-arcs of the circle $S_{v_1,2}$ of $\mathcal{S}${\rm :} in the case two $(j,j+1) \frac{\pi}{4}$-arcs of the circle $S_{v_1,j^{\prime}}$ are contained in the finite set, then for the two $(j_{j^{\prime}},j_{j^{\prime}}+1) \frac{\pi}{4}$-arcs of the circle $S_{v_1,j^{\prime}}$, the pair of the two integers $j_{j^{\prime}}$ must be $(0,1)$, $(1,2)$, $(2,3)$, $(4,5)$, $(5,6)$, or $(6,7)$. Here, in the latter case, $l_{(\mathcal{S},D_{\mathcal{S}}),i}(v_1)=l_{(\mathcal{S},D_{\mathcal{S}}),2}(v_1)$ is a sequence of length one and the finite set consists of one or two $(j,j+1) \frac{\pi}{4}$-arcs of the circle $S_{v_1,1}$ of $\mathcal{S}$ and one or two $(j,j+1) \frac{\pi}{4}$-arcs of the circle $S_{v_1,2}$ of $\mathcal{S}${\rm :} in the case two $(j,j+1) \frac{\pi}{4}$-arcs of the circle $S_{v_1,j^{\prime}}$ are contained in the finite set, then for the two $(j_{j^{\prime}},j_{j^{\prime}}+1) \frac{\pi}{4}$-arcs of the circle $S_{v_1,j^{\prime}}$, the pair of the two integers $j_{j^{\prime}}$ must be $(0,1)$, $(2,3)$, $(3,4)$, $(4,5)$, $(6,7)$, or $(7,0)$. 
\item \label{thm:1.2.2.2} The relation $t_{v_1,1,1}t_{v_1,2,1}=0$ holds in the case $i=1$ and the relation $t_{v_1,1,2}t_{v_1,2,2}=0$ holds in the case $i=2$. For example, in the case $i=1$ with $t_{v_1,1,1}>0$ and $t_{v_1,2,1}=0$, $l_{(\mathcal{S},D_{\mathcal{S}}),i}(v_1)=l_{(\mathcal{S},D_{\mathcal{S}}),1}(v_1)$ is a sequence of length one and the finite set consists of one or two $(j,j+1) \frac{\pi}{4}$-arcs of the circle $S_{v_1,1}$ of $\mathcal{S}$ and the $(3,4) \frac{\pi}{4}$-arc and the $(4,5) \frac{\pi}{4}$-arc of the circle $S_{v_1,2}$ of $\mathcal{S}${\rm :} in the case two $(j,j+1) \frac{\pi}{4}$-arcs of the circle $S_{v_1,1}$ is contained in the finite set, then for the two $(j_{1},j_{1}+1) \frac{\pi}{4}$-arcs of the circle $S_{v_1,1}$, the pair of the two integers $j_{1}$ must be $(0,1)$, $(1,2)$, $(2,3)$, $(4,5)$, $(5,6)$, or $(6,7)$. For example, in the case $i=2$ with $t_{v_1,1,2}>0$ and $t_{v_1,2,2}=0$, $l_{(\mathcal{S},D_{\mathcal{S}}),i}(v_1)=l_{(\mathcal{S},D_{\mathcal{S}}),2}(v_1)$ is a sequence of length one and the finite set consists of one or two $(j,j+1) \frac{\pi}{4}$-arcs of the circle $S_{v_1,1}$ of $\mathcal{S}$ and the $(5,6) \frac{\pi}{4}$-arc and the $(6,7) \frac{\pi}{4}$-arc of the circle $S_{v_1,2}$ of $\mathcal{S}${\rm :} in the case two $(j,j+1) \frac{\pi}{4}$-arcs of the circle $S_{v_1,1}$ is contained in the finite set, then for the two $(j_{1},j_{1}+1) \frac{\pi}{4}$-arcs of the circle $S_{v_1,1}$, the pair of the two integers $j_{1}$ must be $(0,1)$, $(2,3)$, $(3,4)$, $(4,5)$, $(6,7)$, or $(7,0)$.
\end{enumerate}
\end{enumerate}
\item \label{thm:1.3}  For each vertex $v_2$ of degree $2$, $l_{(\mathcal{S},D_{\mathcal{S}}),i}(v_2)$ is a sequence of length two of the finite sets and each of the finite sets is as presented in the following. More precisely, ${q_{D_{\mathcal{S}},i}}^{-1}(v_2)$ must have exactly two points contained in some circle of $\mathcal{S}$, denoted by $x_{{v_2}_1}$ and $x_{{v_2}_2}$ with the rule that $x_{{v_2}_2}$ is above {\rm (}resp. in the right of{\rm )} $x_{{v_2}_1}$ in the case $i=1$ {\rm (}resp. $i=2${\rm )}. 
\begin{enumerate}
\item \label{thm:1.3.1} The first finite set in the sequence $l_{(\mathcal{S},D_{\mathcal{S}}),i}(v_2)$ depends on the type of the point $x_{{v_2}_1}$. In the case $x_{{v_2}_1}$ is contained in the unique circle from $\mathcal{S}$, then the finite set is chosen uniquely according to the rule in {\rm (}\ref{thm:1.1}{\rm )}. In the case $x_{{v_2}_1}$ is contained in exactly two circles $S_{{v_2}_1,1}$ and $S_{{v_2}_1,2}$ from $\mathcal{S}$, then the finite set is chosen uniquely according to the following rule where we abuse the notation similarly. First the we have
the relation $t_{{v_2}_1,1,1}t_{{v_2}_1,2,1} \leq 0$ with $(t_{{v_2}_1,1,1},t_{{v_2}_1,2,1}) \neq (0,0)$  in the case $i=1$ and the relation $t_{{v_2}_1,1,2}t_{{v_2}_1,2,2} \leq 0$ with $(t_{{v_2}_1,1,2},t_{{v_2}_1,2,2}) \neq (0,0)$ in the case $i=2$. We can consider either of the following.
\begin{enumerate}
\item \label{thm:1.3.1.1} 
The relation $t_{{v_2}_1,1,1}t_{{v_2}_1,2,1}<0$ holds in the case $i=1$ and the relation $t_{{v_2}_1,1,2}t_{{v_2}_1,2,2}<0$ holds in the case $i=2$. Here, in the former case, our finite set consists of one or two $(j,j+1) \frac{\pi}{4}$-arcs of the circle $S_{{v_2}_1,1}$ of $\mathcal{S}$ and one or two $(j,j+1) \frac{\pi}{4}$-arcs of the circle $S_{{v_2}_1,2}$ of $\mathcal{S}${\rm :} in the case two $(j,j+1) \frac{\pi}{4}$-arcs of the circle $S_{{v_2}_1,j^{\prime}}$ is contained in the finite set, then for the two $(j_{j^{\prime}},j_{j^{\prime}}+1) \frac{\pi}{4}$-arcs of the circle $S_{{v_2}_1,j^{\prime}}$, the pair of the two integers $j_{j^{\prime}}$ must be $(0,1)$, $(1,2)$, $(2,3)$, $(4,5)$, $(5,6)$, or $(6,7)$. Here, in the latter case, our finite set consists of one or two $(j,j+1) \frac{\pi}{4}$-arcs of the circle $S_{{v_2}_1,1}$ of $\mathcal{S}$ and one or two $(j,j+1) \frac{\pi}{4}$-arcs of the circle $S_{{v_2}_1,2}$ of $\mathcal{S}${\rm :} in the case two $(j,j+1) \frac{\pi}{4}$-arcs of the circle $S_{{v_2}_1,j^{\prime}}$ is contained in the finite set, then for the two $(j_{j^{\prime}},j_{j^{\prime}}+1) \frac{\pi}{4}$-arcs of the circle $S_{{v_2}_1,j^{\prime}}$, the pair of the two integers $j_{j^{\prime}}$ must be $(0,1)$, $(2,3)$, $(3,4)$, $(4,5)$, $(6,7)$, or $(7,0)$. 
\item \label{thm:1.3.1.2} The relation $t_{{v_2}_1,1,1}t_{{v_2}_1,2,1}=0$ with $(t_{{v_2}_1,1,1},t_{{v_2}_1,2,1}) \neq (0,0)$ holds in the case $i=1$ and the relation $t_{{v_2}_1,1,2}t_{{v_2}_1,2,2}=0$ holds with $(t_{{v_2}_1,1,2},t_{{v_2}_1,2,2}) \neq (0,0)$ holds in the case $i=2$.

In the case $i=1$ with $t_{{v_2}_1,1,1}>0$ and $t_{{v_2}_1,2,1}=0$, our finite set consists of one or two $(j,j+1) \frac{\pi}{4}$-arcs of the circle $S_{{v_2}_1,1}$ of $\mathcal{S}$ and the $(0,1) \frac{\pi}{4}$-arc and the $(7,8) \frac{\pi}{4}$-arc of the circle $S_{{v_2}_1,2}$ of $\mathcal{S}${\rm :} in the case two $(j,j+1) \frac{\pi}{4}$-arcs of the circle $S_{{v_2}_1,1}$ are contained in the finite set, then for the two $(j_{1},j_{1}+1) \frac{\pi}{4}$-arcs of the circle $S_{{v_2}_1,1}$, the pair of the two integers $j_{1}$ must be $(0,1)$, $(1,2)$, $(2,3)$, $(4,5)$, $(5,6)$, or $(6,7)$.

In the case $i=1$ with $t_{{v_2}_1,1,1}<0$ and $t_{{v_2}_1,2,1}=0$, our finite set consists of one or two $(j,j+1) \frac{\pi}{4}$-arcs of the circle $S_{{v_2}_1,1}$ of $\mathcal{S}$ and the $(3,4) \frac{\pi}{4}$-arc and the $(4,5) \frac{\pi}{4}$-arc of the circle $S_{{v_2}_1,2}$ of $\mathcal{S}${\rm :} in the case two $(j,j+1) \frac{\pi}{4}$-arcs of the circle $S_{{v_2}_1,1}$ are contained in the finite set, then for the two $(j_{1},j_{1}+1) \frac{\pi}{4}$-arcs of the circle $S_{{v_2}_1,1}$, the pair of the two integers $j_{1}$ must be $(0,1)$, $(1,2)$, $(2,3)$, $(4,5)$, $(5,6)$, or $(6,7)$.

In the case $i=2$ with $t_{{v_2}_1,1,2}>0$ and $t_{{v_2}_1,2,2}=0$, our finite set consists of one or two $(j,j+1) \frac{\pi}{4}$-arcs of the circle $S_{{v_2}_1,1}$ of $\mathcal{S}$ and the $(1,2) \frac{\pi}{4}$-arc and the $(2,3) \frac{\pi}{4}$-arc of the circle $S_{{v_2}_1,2}$ of $\mathcal{S}${\rm :} in the case two $(j,j+1) \frac{\pi}{4}$-arcs of the circle $S_{{v_2}_1,1}$ are contained in the finite set, then for the two $(j_{1},j_{1}+1) \frac{\pi}{4}$-arcs of the circle $S_{{v_2}_1,1}$, the pair of the two integers $j_{1}$ must be $(0,1)$, $(2,3)$, $(3,4)$, $(4,5)$, $(6,7)$, or $(7,0)$.

In the case $i=2$ with $t_{{v_2}_1,1,2}<0$ and $t_{{v_2}_1,2,2}=0$, our finite set consists of one or two $(j,j+1) \frac{\pi}{4}$-arcs of the circle $S_{{v_2}_1,1}$ of $\mathcal{S}$ and the $(5,6) \frac{\pi}{4}$-arc and the $(6,7) \frac{\pi}{4}$-arc of the circle $S_{{v_2}_1,2}$ of $\mathcal{S}${\rm :} in the case two $(j,j+1) \frac{\pi}{4}$-arcs of the circle $S_{{v_2}_1,1}$ are contained in the finite set, then for the two $(j_{1},j_{1}+1) \frac{\pi}{4}$-arcs of the circle $S_{{v_2}_1,1}$, the pair of the two integers $j_{1}$ must be $(0,1)$, $(2,3)$, $(3,4)$, $(4,5)$, $(6,7)$, or $(7,0)$. 
\end{enumerate}
\item \label{thm:1.3.2} The second finite set in the sequence $l_{(\mathcal{S},D_{\mathcal{S}}),i}(v_2)$ depends on the type of the point $x_{{v_2}_2}$ and we define the finite set in the way same as the case of the first finite set and $x_{{v_2}_1}$. We have another restriction that in the case the first finite set is as in {\rm (}\ref{thm:1.1}{\rm )}, the second finite set is not a finite set defined as in {\rm (}\ref{thm:1.1}{\rm )}.
\end{enumerate}
\item \label{thm:1.4}
For each vertex $v$ of degree at least $3$, $l_{(\mathcal{S},D_{\mathcal{S}}),i}(v)$
is defined as follows. Let $k_{v,+}$ {\rm (}$k_{v,-}${\rm )} denote the number of all oriented edges of the V-digraph departing from {\rm (}resp. entering{\rm )} $v$ as before. The set ${q_{D_{\mathcal{S}},i}}^{-1}(v)$ must have exactly $k_{v,+}+k_{v,-}>2$ points contained in some circles of $\mathcal{S}$, each of which is denoted by $x_{{v}_{0,j}}$ with integers $1 \leq j \leq k_{v,+}+k_{v,-}$ and the rule that $x_{{v}_{0,j+1}}$ is above {\rm (}in the right of{\rm )} $x_{{v}_{0,j}}$ for $1 \leq j \leq  k_{v,+}+k_{v,-}-1$ in the case $i=1$ {\rm (}resp. $i=2${\rm )}. 
\begin{enumerate}
\item \label{thm:1.4.1} The 1st and the {\rm (}$k_{v,+}+k_{v,-}${\rm )}-th finite sets in the sequence $l_{(\mathcal{S},D_{\mathcal{S}}),i}(v)$ depend on the types of the points $x_{{v}_{0,1}}$ and $x_{{v}_{0,k_{v,+}+k_{v,-}}}$, respectively. They are defined in the way presented in {\rm (}\ref{thm:1.3}{\rm )} uniquely. Different from the case {\rm (}\ref{thm:1.3}{\rm )} {\rm (}, presented in {\rm (}\ref{thm:1.3.2} explicitly{\rm )}{\rm )}, both finite sets may be defined in the way {\rm (}\ref{thm:1.1}{\rm )}.
\item \label{thm:1.4.2} The $k$-th finite set in the sequence $l_{(\mathcal{S},D_{\mathcal{S}}),i}(v)$ with $2 \leq k \leq k_{v,+}+k_{v,-}-1$ depends on the type of the point $x_{{v}_{0,k}}$. This set is the two-element set consisting of $(3,4) \frac{\pi}{4}$-arc and the $(4,5) \frac{\pi}{4}$-arc of a uniquely chosen circle or the two-element set consisting of $(0,1) \frac{\pi}{4}$-arc and the $(7,8) \frac{\pi}{4}$-arc of a uniquely chosen circle in the case $i=1${\rm :} $k_{v,+}-1$ of the finite sets are of the former type and $k_{v,-}-1$ of the finite sets are of the latter type. This set is the two-element set consisting of $(1,2) \frac{\pi}{4}$-arc and the $(2,3) \frac{\pi}{4}$-arc of a uniquely chosen circle or the two-element set consisting of $(5,6) \frac{\pi}{4}$-arc and the $(6,7) \frac{\pi}{4}$-arc of a uniquely chosen circle in the case $i=2${\rm :} $k_{v,+}-1$ of the finite sets are of the latter type and $k_{v,}-1$ of the finite sets are of the former type. For distinct finite sets in these $k_{v,+}+k_{v,-}-2 \geq 1$ sets here, these uniquely chosen $k_{v,+}+k_{v,-}-2 \geq 1$ circles of $\mathcal{S}$ are mutually distinct and these circles are different from the circles of $\mathcal{S}$ containing $x_{{v}_{0,1}}$ and $x_{{v}_{0,k_{v,+}+k_{v,-}}}$. 
\end{enumerate}
\item The present statement generalizes some statements of Proposition \ref{prop:1} and Theorem \ref{thm:1} {\rm (}\ref{thm:1.1}{\rm )}.

In the case $i=1$, if some $(j,j+1) \frac{\pi}{4}$-arcs of a single circle $S_s$ appear both in the $j_1$-th finite set ${\mathcal{F}}_{s,1}$ and the $j_2$-th finite set ${\mathcal{F}}_{s,2}$ with $j_1<j_2$ in the sequence $l_{(\mathcal{S},D_{\mathcal{S}}),i}(s)$ of finite sets, then these arcs in ${\mathcal{F}}_{s,1}$ must be $(j,j+1) \frac{\pi}{4}$-arcs of $S_s$ with $j=4,5,6,7$ only and those in ${\mathcal{F}}_{s,2}$ must be $(j,j+1) \frac{\pi}{4}$-arcs of $S_s$ with $j=0,1,2,3$ only.

In the case $i=2$, if some $(j,j+1) \frac{\pi}{4}$-arcs of a single circle $S_s$ appear both in the $j_1$-th finite set ${\mathcal{F}}_{s,1}$ and the $j_2$-th finite set ${\mathcal{F}}_{s,2}$ with $j_1<j_2$ in the sequence $l_{(\mathcal{S},D_{\mathcal{S}}),i}(s)$ of finite sets, then these arcs in ${\mathcal{F}}_{s,1}$ must be $(j,j+1) \frac{\pi}{4}$-arcs of $S_s$ with $j=2,3,4,5$ only and those in ${\mathcal{F}}_{s,2}$ must be $(j,j+1) \frac{\pi}{4}$-arcs of $S_s$ with $j=0,1,6,7$ only.

Furthermore, if these properties are enjoyed, then $j_1=1$ and $j_2$ is equal to the length of the sequence $l_{(\mathcal{S},D_{\mathcal{S}}),i}(s)$.
\end{enumerate}  
\end{Thm}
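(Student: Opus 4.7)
The plan is to verify the theorem case-by-case on the disjoint union of the edge set and the vertex set, showing that in every case the combinatorial data produced by intersecting ${q_{D_{\mathcal{S}},i}}^{-1}(s)$ with the circles of $\mathcal{S}$ satisfies exactly the listed restrictions. The proof is essentially an assembly of Propositions \ref{prop:1} through \ref{prop:7} together with two geometric inputs that are already in play earlier in the section: the fact that over an interior of each edge the projection ${\pi}_{2,1,i}{\mid}_{\overline{D_{\mathcal{S}}}}$ is a trivial smooth $D^1$-bundle (hence the boundary preimage splits into two smooth curves $C_{e,1},C_{e,2}$ with a well-defined ``above / to the right of'' relation), and the convexity of $\overline{D_{\mathcal{S}}}$ near each vertex (used already in Proposition \ref{prop:7}).

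First I would handle edges. By the local trivialization, for each edge $e$ the curves $C_{e,1},C_{e,2}$ each lie in a unique circle $S_{e,1},S_{e,2}\in\mathcal{S}$, so the two finite sets of $(j,j+1)\tfrac{\pi}{4}$-arcs are well-defined. Smoothness of the curves forbids simultaneous occurrence of two arcs sharing an endpoint that is a $(j,\tfrac{\pi}{4})$-pole with $j=0,4$ (when $i=1$) or $j=2,6$ (when $i=2$)—otherwise the restriction of ${\pi}_{2,1,i}$ to $C_{e,k}$ would fail to be injective on an open neighborhood; this gives the ``no $(3,4)$ and $(4,5)$ simultaneously'' type restrictions. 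The case $S_{e,1}=S_{e,2}$ is immediate from Proposition \ref{prop:1}. The upper bound of $4$ on the cardinalities follows because a connected arc on a circle consisting of $(j,j+1)\tfrac{\pi}{4}$-arcs whose restriction to ${\pi}_{2,1,i}$ is injective cannot cross more than one of the forbidden poles, hence spans at most half the circle.

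Next I would treat the vertices of degree $1$ and $2$. For degree $1$, the preimage is a single point, and the two subcases (contained in one circle, or in two circles) are precisely those analyzed in Propositions \ref{prop:2}, \ref{prop:3}, \ref{prop:4}; the sign conditions on $t_{v_1,j',k}$ translate directly into the admissible pairs of integer labels listed in (\ref{thm:1.2.2.1}) and (\ref{thm:1.2.2.2}) via a direct check of which $(j,j+1)\tfrac{\pi}{4}$-arcs may meet the tangent line in the appropriate half-plane. For degree $2$, Proposition \ref{prop:5} gives the two boundary points and their basic constraints, Proposition \ref{prop:6} gives the sign relation $t_{v_2,1,k}t_{v_2,2,k}\le 0$ opposite to the degree-$1$ case, and the ``above/right'' ordering of the two points lets me assign them canonically to the first and second entry of the sequence. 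The only genuinely new combinatorial constraint in (\ref{thm:1.3.2}) is that the two finite sets cannot both be of the ``single circle, $(j,\tfrac{\pi}{4})$-pole'' type of (\ref{thm:1.1}); this is exactly clause (3) of Proposition \ref{prop:5}. For degree $\ge 3$, Proposition \ref{prop:7} already delivers the enumeration by $k_{v,+}+k_{v,-}$ points, the distribution into $(4,\tfrac{\pi}{4})$- and $(0,\tfrac{\pi}{4})$-poles (respectively $(6,\tfrac{\pi}{4})$- and $(2,\tfrac{\pi}{4})$-poles), and the distinctness of the $k_{v,+}+k_{v,-}-2$ intermediate circles.

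The main obstacle will be the last, global, clause: tracking a single circle $S_s$ that contributes arcs to two different finite sets ${\mathcal{F}}_{s,1},{\mathcal{F}}_{s,2}$ in the sequence $l_{(\mathcal{S},D_{\mathcal{S}}),i}(s)$. The approach here is to use once more the convexity of $\overline{D_{\mathcal{S}}}$ together with the height function $V_{D_{\mathcal{S}},i}$: if a circle $S_s$ contributes to two different positions in a sequence ordered by height (resp. by horizontal coordinate), then the two contributions must correspond to the lower and the upper arc of $S_s$ between its two $(j,\tfrac{\pi}{4})$-poles with $j=0,4$ (resp. $j=2,6$); the lower arc uses only $j\in\{4,5,6,7\}$ (resp.\ $j\in\{2,3,4,5\}$) and the upper arc uses only $j\in\{0,1,2,3\}$ (resp.\ $j\in\{0,1,6,7\}$). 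A single circle cannot contribute to a middle finite set of a degree-$\ge 3$ vertex and simultaneously to an extremal one, because Proposition \ref{prop:7} says the $k_{v,+}+k_{v,-}-2$ intermediate circles are mutually distinct and distinct from the extremal ones; this forces $j_1=1$ and $j_2$ equal to the total length of the sequence, completing the proof.
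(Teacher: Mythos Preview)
Your proposal is correct and follows precisely the approach the paper itself takes: immediately before stating Theorem~\ref{thm:1} the paper says ``By remembering Propositions of this subsection and surrounding arguments, we can check the following and we do not present its proof explicitly,'' so the intended proof is exactly the case-by-case assembly of Propositions~\ref{prop:1}--\ref{prop:7} together with the trivial-$D^1$-bundle structure over edge interiors and the local convexity already invoked in Proposition~\ref{prop:7}. Your write-up in fact supplies considerably more detail than the paper does; the only small overstatement is that Proposition~\ref{prop:7} asserts the mutual distinctness of the $k_{v,+}+k_{v,-}-2$ intermediate circles but not their distinctness from the two extremal circles---that extra clause is part of Theorem~\ref{thm:1}(\ref{thm:1.4.2}) itself and follows from the same convexity observation (an intermediate circle meets the vertical fiber only at its pole), so your final paragraph still goes through.
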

\begin{Ex}
\label{ex:1}
Let $\mathcal{S}=\{S^1\}$ and $D_{\mathcal{S}}=D^2$. The Poincar\'e-Reeb {\rm (}V-di{\rm )}graph of $(\mathcal{S},D_{\mathcal{S}})$ for ${\pi}_{2,1,i}$ consists of exactly one edge and two vertices.  
We consider the $i=1$ case.
For the edge $e$, $l_{(\mathcal{S},D_{\mathcal{S}}),1}(e)$ is a sequence of length $2$ such that the first finite set is the four element set consisting of $(j,j+1) \frac{\pi}{4}$-arcs of $S^1$ with $j=4,5,6,7$ and that the second finite set is the four element set consisting of the $(j,j+1) \frac{\pi}{4}$-arcs of $S^1$ with $j=0,1,2,3$. For the vertices $v_{e,1,1}$ and $v_{e,1,2}$ with $V_{D_{\mathcal{S}},1}(v_{e,1,1})<V_{D_{\mathcal{S}},1}(v_{e,1,2})$, $l_{(\mathcal{S},D_{\mathcal{S}}),1}(v_{e,1,1})$ is a sequence of length $1$ such that the finite set is the two element set consisting of the $(3,4) \frac{\pi}{4}$-arc and the $(4,5) \frac{\pi}{4}$-arc of $S^1$ and $l_{(\mathcal{S},D_{\mathcal{S}}),1}(v_{e,1,2})$ is a sequence of length $1$ such that the finite set is the two element set consisting of $(0,1) \frac{\pi}{4}$-arc and the $(7,8) \frac{\pi}{4}$-arc of $S^1$. 
  
\end{Ex}
\section{Local changes of Poincar\'e-Reeb V-digraphs of $(\mathcal{S},D_{\mathcal{S}})$ of a certain new type according to additions of circles.}
We concentrate on the Poincar\'e-Reeb V-digraphs of $(\mathcal{S},D_{\mathcal{S}})$ for ${\pi}_{2,1,1}$ and we can consider similar arguments for the Poincar\'e-Reeb V-digraphs of $(\mathcal{S},D_{\mathcal{S}})$ for ${\pi}_{2,1,2}$, thanks to to the symmetry. Our symmetry is presented explicitly in \cite[Proposition 2]{kitazawa3} for example and this is also very elementary.

A {\it chord} of a circle means a segment connecting two distinct points in the circle. For example, this is uniquely determined. The interior of the chord of the circle is in the interior of the circle. At each of the two points of the boundary of the chord of the circle the sum of tangent vector spaces of the chord of the circle and the circle is the tangent vector space of the plane there.

Here, we also need the notion of {\it parallel} subsets and "similarity" in plane geometry. The notion of {\it parallel} subsets and that of {\it parallel} transformation are also fundamental in Riemannian manifolds: they are defined by considering {\it Riemannian connections}. The case for ${\mathbb{R}}^n$ is also a specific case from the theory of Rimannian manifolds.
\begin{Thm}
\label{thm:2}
For a map $l_{(\mathcal{S},D_{\mathcal{S}}),1}$ in Theorem \ref{thm:1} and an edge $e_1$ of the graph, for each point $x_{j^{\prime},\mathcal{S}}$ in the interior of a $(j,j+1) \frac{\pi}{4}$-arc of some circle of $\mathcal{S}$ in a finite set in the sequence $l_{(\mathcal{S},D_{\mathcal{S}}),1}(e_1)$ , we can choose a corresponding circle $S_{x_{j^{\prime}},r_{j^{\prime}}}$ of radius $r_{j^{\prime}}>0$ centered at another point $x_{j^{\prime}} \in {\mathbb{R}}^2$ bounding the disk containing the point $x_{j^{\prime},\mathcal{S}}$ in its interior and at least one of the following two holds for local changes of the Poincar\'e-Reeb V-digraphs of $(\mathcal{S},D_{\mathcal{S}})$ for ${\pi}_{2,1,i}$ {\rm (}$i=1,2${\rm )}.
\begin{enumerate}
\item \label{thm:2.1} We can have all the following five cases by suitable choice of $x_{j^{\prime}} \in {\mathbb{R}}^2$ and $r_{j^{\prime}}>0$.
\begin{enumerate}
\item \label{thm:2.1.1}  For the Poincar\'e-Reeb V-digraph of $(\mathcal{S},D_{\mathcal{S}})$ for ${\pi}_{2,1,i}$ for each $i=1,2$, we add two vertices in the interior of $e_i$ in such a way that $e_2$ is a suitably chosen edge of the Poincar\'e-Reeb V-digraph of $(\mathcal{S},D_{\mathcal{S}})$ for ${\pi}_{2,1,2}$, that the function preserves the original orientations on edges and segments in the edges, that the restriction to the vertex set of the original V-digraph is same as the original function  ${l_{D_{\mathcal{S},i}}}$ on the vertex set of the original V-digraph $(W_{D_{\mathcal{S}},i},l_{D_{\mathcal{S},i}})$, and that the values at the new two vertices are sufficiently close. 
\item \label{thm:2.1.2}
For the Poincar\'e-Reeb V-digraph of $(\mathcal{S},D_{\mathcal{S}})$ for ${\pi}_{2,1,1}$, we add two vertices in the interior of $e_1$ and another new edge $e_{v,1}$ connecting one of the two new vertices $v_{e,1,1}$ and $v_{e,1,2}$ and another new vertex $v_{e,1}$ of degree $1$. We also define the new function ${l_{D_{\mathcal{S},1}}}^{\prime}$ on the vertex set of the new graph in the following way.
\begin{enumerate}
\item The restriction of ${l_{D_{\mathcal{S},1}}}^{\prime}$ to the vertex set of the original V-digraph is same as the original function  ${l_{D_{\mathcal{S},1}}}$ on the vertex set of the original V-digraph $(W_{D_{\mathcal{S}},1},l_{D_{\mathcal{S},1}})$.
\item The function ${l_{D_{\mathcal{S},1}}}^{\prime}$ preserves the original orientations on edges and segments in the edges. In the case the edge $e_{v,1}$ contains $v_{e,1,a}$ {\rm (}$a=1,2${\rm )}, the values ${l_{D_{\mathcal{S},1}}}^{\prime}(v_{e,1,a})$ and ${l_{D_{\mathcal{S},1}}}^{\prime}(v_{e,1})$ are chosen to be sufficiently close. 

\end{enumerate}
 
For the Poincar\'e-Reeb V-digraph of $(\mathcal{S},D_{\mathcal{S}})$ for ${\pi}_{2,1,2}$, we add two vertices in the interior of some edge $e_2$ of the graph. We also define the new function ${l_{D_{\mathcal{S},2}}}^{\prime}$ on the vertex set of the new graph in such a way that the function preserves the original orientations on edges and segments in the edges, that the restriction to the vertex set of the original V-digraph is same as the original function  ${l_{D_{\mathcal{S},2}}}$ on the vertex set of the original V-digraph $(W_{D_{\mathcal{S}},2},l_{D_{\mathcal{S},2}})$, and that the values at the new two vertices are sufficiently close. 
\item \label{thm:2.1.3}
 For the Poincar\'e-Reeb V-digraph of $(\mathcal{S},D_{\mathcal{S}})$ for ${\pi}_{2,1,i}$ for each $i=1,2$, we add two vertices in the interior of $e_i$ and another new edge $e_{v,i}$ connecting one of the two new vertices $v_{e,i,1}$ and $v_{e,i,2}$ and another new vertex $v_{e,i}$ of degree $1$ where $e_2$ is a suitably chosen edge of the Poincar\'e-Reeb V-digraph of $(\mathcal{S},D_{\mathcal{S}})$ for ${\pi}_{2,1,2}$. We also define the new function ${l_{D_{\mathcal{S},i}}}^{\prime}$ on the vertex set of the new graph in the following way.
\begin{enumerate}
\item The restriction of ${l_{D_{\mathcal{S},i}}}^{\prime}$ to the vertex set of the original V-digraph is same as the original function ${l_{D_{\mathcal{S},i}}}$ on the vertex set of the original V-digraph $(W_{D_{\mathcal{S}},i},l_{D_{\mathcal{S},i}})$.
\item The function ${l_{D_{\mathcal{S},i}}}^{\prime}$ preserves the original orientations on edges and segments in the edges. In the case the edge $e_{v,i}$ contains $v_{e,i,a}$ {\rm (}$a=1,2${\rm )}, the values ${l_{D_{\mathcal{S},i}}}^{\prime}(v_{e,i,a})$ and ${l_{D_{\mathcal{S},i}}}^{\prime}(v_{e})$ are chosen to be sufficiently close. 

\end{enumerate}
\item \label{thm:2.1.4}
 For the Poincar\'e-Reeb V-digraph of $(\mathcal{S},D_{\mathcal{S}})$ for ${\pi}_{2,1,1}$, we add two vertices $v_{e,1,a}$ {\rm (}$a=1,2${\rm )} in the interior of $e_1$ and two new edges $e_{v,1,a}$ connecting the new vertex $v_{e,1,a}$ and another new vertex $v_{e,1,a,+}$ {\rm (}$a=1,2${\rm )} of degree $1$. We also define the new function ${l_{D_{\mathcal{S},1}}}^{\prime}$ on the vertex set of the new graph in the following way.
\begin{enumerate}
\item The restriction of ${l_{D_{\mathcal{S},1}}}^{\prime}$ to the vertex set of the original V-digraph is same as the original function  ${l_{D_{\mathcal{S},1}}}$ on the vertex set of the original V-digraph $(W_{D_{\mathcal{S}},1},l_{D_{\mathcal{S},1}})$.
\item The function ${l_{D_{\mathcal{S},1}}}^{\prime}$ preserves the original orientations on edges and segments in the edges. The values ${l_{D_{\mathcal{S},1}}}^{\prime}(v_{e,1,1,+})$ and ${l_{D_{\mathcal{S},1}}}^{\prime}(v_{e,1,2,+})$ are chosen to be sufficiently close. The values ${l_{D_{\mathcal{S},1}}}^{\prime}(v_{e,1,a})$ and ${l_{D_{\mathcal{S},1}}}^{\prime}(v_{e,1,a,+})$ are also chosen to be sufficiently close for $a=1,2$. We can also do so that either of the inequality ${l_{D_{\mathcal{S},1}}}^{\prime}(v_{e,1,1})<{l_{D_{\mathcal{S},1}}}^{\prime}(v_{e,1,1,+})<{l_{D_{\mathcal{S},1}}}^{\prime}(v_{e,1,2,+})<{l_{D_{\mathcal{S},1}}}^{\prime}(v_{e,1,2})$ or ${l_{D_{\mathcal{S},1}}}^{\prime}(v_{e,1,2})<{l_{D_{\mathcal{S},1}}}^{\prime}(v_{e,1,2,+})<{l_{D_{\mathcal{S},1}}}^{\prime}(v_{e,1,1,+})<{l_{D_{\mathcal{S},1}}}^{\prime}(v_{e,1,1})$ holds. 

\end{enumerate}
For the Poincar\'e-Reeb V-digraph of $(\mathcal{S},D_{\mathcal{S}})$ for ${\pi}_{2,1,2}$, we add two vertices in the interior of some edge $e_2$ and another new edge $e_{v,2}$ connecting one of the two new vertices $v_{e,2,1}$ and $v_{e,2,2}$ and another new vertex $v_{e,2}$ of degree $1$ where $e_2$ is a suitably chosen edge of the Poincar\'e-Reeb V-digraph of $(\mathcal{S},D_{\mathcal{S}})$ for ${\pi}_{2,1,2}$. We also define the new function ${l_{D_{\mathcal{S}2}}}^{\prime}$ on the vertex set of the new graph in the following way.
\begin{enumerate}
\item The restriction of ${l_{D_{\mathcal{S},2}}}^{\prime}$ to the vertex set of the original V-digraph is same as the original function ${l_{D_{\mathcal{S},2}}}$ on the vertex set of the original V-digraph $(W_{D_{\mathcal{S}},2},l_{D_{\mathcal{S},2}})$.
\item The function ${l_{D_{\mathcal{S},2}}}^{\prime}$ preserves the original orientations on edges and segments in the edges. In the case the edge $e_{v,2}$ contains $v_{e,2,a}$ {\rm (}$a=1,2${\rm )}, the values ${l_{D_{\mathcal{S},2}}}^{\prime}(v_{e,2,a})$ and ${l_{D_{\mathcal{S},2}}}^{\prime}(v_{e})$ are chosen to be sufficiently close. 

\end{enumerate}
\item  \label{thm:2.1.5}
For the Poincar\'e-Reeb V-digraph of $(\mathcal{S},D_{\mathcal{S}})$ for ${\pi}_{2,1,i}$ for each $i=1,2$, we add two vertices $v_{e,i,a}$ {\rm (}$a=1,2${\rm )} in the interior of $e_i$ and two new edges $e_{v,i,a}$ connecting the new vertex $v_{e,i,a}$ and another new vertex $v_{e,i,a,+}$ {\rm (}$a=1,2${\rm )} of degree $1$ where $e_2$ is a suitably chosen edge of the Poincar\'e-Reeb V-digraph of $(\mathcal{S},D_{\mathcal{S}})$ for ${\pi}_{2,1,i}$. We also define the new function ${l_{D_{\mathcal{S},i}}}^{\prime}$ on the vertex set of the new graph in the following way.
\begin{enumerate}
\item The restriction of ${l_{D_{\mathcal{S},i}}}^{\prime}$ to the vertex set of the original V-digraph is same as the original function ${l_{D_{\mathcal{S},i}}}$ on the vertex set of the original V-digraph $(W_{D_{\mathcal{S}},i},l_{D_{\mathcal{S},i}})$.
\item The function ${l_{D_{\mathcal{S},i}}}^{\prime}$ preserves the original orientations on edges and segments in the edges. The values ${l_{D_{\mathcal{S},i}}}^{\prime}(v_{e,i,1,+})$ and ${l_{D_{\mathcal{S},i}}}^{\prime}(v_{e,i,2,+})$ are chosen to be sufficiently close. The values ${l_{D_{\mathcal{S},i}}}^{\prime}(v_{e,i,a})$ and ${l_{D_{\mathcal{S},i}}}^{\prime}(v_{e,i,a,+})$ are also chosen to be sufficiently close for $a=1,2$. We can also do so that for $i=1,2$, either of the inequalities ${l_{D_{\mathcal{S},i}}}^{\prime}(v_{e,1,1})<{l_{D_{\mathcal{S},i}}}^{\prime}(v_{e,i,1,+})<{l_{D_{\mathcal{S},i}}}^{\prime}(v_{e,i,2,+})<{l_{D_{\mathcal{S},i}}}^{\prime}(v_{e,i,2})$ or ${l_{D_{\mathcal{S},i}}}^{\prime}(v_{e,i,2})<{l_{D_{\mathcal{S},i}}}^{\prime}(v_{e,i,2,+})<{l_{D_{\mathcal{S},i}}}^{\prime}(v_{e,i,1,+})<{l_{D_{\mathcal{S},i}}}^{\prime}(v_{e,i,1})$ holds. 
\end{enumerate}
\end{enumerate}
\item \label{thm:2.2} In the previous case, the types of the changes for the Poincar\'e-Reeb V-digraphs of $(\mathcal{S},D_{\mathcal{S}})$ for ${\pi}_{2,1,1}$ and ${\pi}_{2,1,2}$ are reversed.
\end{enumerate}
\end{Thm}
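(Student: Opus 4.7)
The plan is to construct the new circle $S_{x_{j'},r_{j'}}$ by a two-parameter family (the direction of the centre from $x_{j',\mathcal{S}}$ along the normal to the existing circle, and the radius) and to read off the five modification types of the statement from the positions of the four extremal poles of the new circle, namely its $(0,\tfrac{\pi}{4})$-, $(4,\tfrac{\pi}{4})$-, $(2,\tfrac{\pi}{4})$- and $(6,\tfrac{\pi}{4})$-poles. Since $x_{j',\mathcal{S}}$ lies in the interior of a $(j,j+1)\tfrac{\pi}{4}$-arc of an existing circle $S_{e_1,a}\in\mathcal{S}$, the tangent direction of $S_{e_1,a}$ there is non-horizontal and non-vertical, so by taking $x_{j'}$ along the normal line to $S_{e_1,a}$ at $x_{j',\mathcal{S}}$ and $r_{j'}$ small, one guarantees that the new circle meets $S_{e_1,a}$ transversally at exactly two points near $x_{j',\mathcal{S}}$. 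Choosing $r_{j'}$ small relative to the distances from $x_{j',\mathcal{S}}$ to all other circles of $\mathcal{S}$ then makes the new circle disjoint from every other circle, so the conditions of Definition \ref{def:2} are preserved.

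With this set-up in place I would analyse the modification of $\overline{D_{\mathcal{S}'}}$. One of the two choices of $E_{x_{j'},r_{j'}}$ in Definition \ref{def:2} removes from $\overline{D_\mathcal{S}}$ the small lens-shaped region bounded by a short arc of $S_{e_1,a}$ through $x_{j',\mathcal{S}}$ and the opposing short arc $\alpha$ of $S_{x_{j'},r_{j'}}$; the other choice is ruled out by the connectivity clause of Definition \ref{def:2} (\ref{def:3.2}). The new portion of the boundary $\partial\overline{D_{\mathcal{S}'}}$ is precisely $\alpha$. For the projection $\pi_{2,1,1}$, each $(0,\tfrac{\pi}{4})$- or $(4,\tfrac{\pi}{4})$-pole of $S_{x_{j'},r_{j'}}$ that lies on $\alpha$ becomes an additional critical point of $\pi_{2,1,1}|_{\overline{D_{\mathcal{S}'}}}$ and, by the local analyses of Section \ref{sec:3} and Proposition \ref{prop:7}, contributes a new degree-$1$ vertex joined to the interior of $e_1$ by a freshly created edge. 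Poles not lying on $\alpha$ stay outside $\overline{D_{\mathcal{S}'}}$ and contribute nothing. In parallel, the two transversal intersection points of $\alpha$ with $S_{e_1,a}$ become new degree-$2$ vertices subdividing $e_1$. The identical dichotomy holds for the $(2,\tfrac{\pi}{4})$- and $(6,\tfrac{\pi}{4})$-poles with respect to $\pi_{2,1,2}$ and the edge $e_2$ of the $i=2$ graph through which $x_{j',\mathcal{S}}$ projects.

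The five cases (\ref{thm:2.1.1})--(\ref{thm:2.1.5}) correspond exactly to the pairs (number of poles on $\alpha$ relevant for $i=1$, number of poles on $\alpha$ relevant for $i=2$) in $\{(0,0),(1,0),(1,1),(2,1),(2,2)\}$. Each pair is realized concretely as follows: the side of $S_{e_1,a}$ on which $x_{j'}$ is placed, together with the magnitude of $r_{j'}$, determines how far $\alpha$ extends. As $r_{j'}$ grows from very small, $\alpha$ progressively sweeps across the extremal poles of $S_{x_{j'},r_{j'}}$ one at a time, in a cyclic order determined by the tangent direction of $S_{e_1,a}$ at $x_{j',\mathcal{S}}$. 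If that tangent direction lies in the octant where the horizontal pole is crossed before the vertical one, the realised sequence of pair sizes is the one in part (\ref{thm:2.1}); if instead the vertical pole is crossed first, the same five configurations are realised but with $i=1$ and $i=2$ interchanged, giving part (\ref{thm:2.2}).

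The main obstacle I expect is the bookkeeping forced by the label conditions of Theorem \ref{thm:1}: the freshly created vertices and subdivided edges must be assigned $(j,j+1)\tfrac{\pi}{4}$-arc labels compatible with the admissible patterns listed there. This requires a case analysis on the particular index $j$ of the arc containing $x_{j',\mathcal{S}}$, and on which pair of adjacent arcs of $S_{x_{j'},r_{j'}}$ meets $S_{e_1,a}$ at each transversal intersection point; in every configuration the existence and uniqueness of the required label follow from the local arguments already recorded in Propositions \ref{prop:2}--\ref{prop:7}. The \emph{at least one of} dichotomy between (\ref{thm:2.1}) and (\ref{thm:2.2}) is then nothing more than the two possibilities for the cyclic order in which the four extremal poles are engulfed by $\alpha$.
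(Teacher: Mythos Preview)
Your approach is essentially the same as the paper's: both constructions introduce a one-parameter family of small circles meeting the existing boundary circle transversally at two points near $x_{j',\mathcal{S}}$, and both read off the five cases from which of the four extremal $(0,\tfrac{\pi}{4})$-, $(2,\tfrac{\pi}{4})$-, $(4,\tfrac{\pi}{4})$-, $(6,\tfrac{\pi}{4})$-poles of the new circle land on the new boundary arc $\alpha$. The dichotomy between (\ref{thm:2.1}) and (\ref{thm:2.2}) is identified in both proofs with the octant of the tangent direction at $x_{j',\mathcal{S}}$.

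The parametrization differs. The paper fixes a short chord of the existing circle $S$ near $x_{j',\mathcal{S}}$ (with slope equal to the tangent slope, hence in $(0,1)$ for the representative $(2,3)\tfrac{\pi}{4}$-arc case) and then varies the new circle through the two fixed chord endpoints by requiring the resulting lune to be similar, via scaling and translation only, to a model lune cut from $S^1$ by a line of the same slope; the five cases are then literally the five parallel positions of that model line in the accompanying figure. Your version instead places the centre $x_{j'}$ on the normal and varies the radius and the side. The paper's chord-and-similarity picture has the advantage that the two intersection points with $S$ stay fixed while the new circle sweeps, so the transversality and NI-arrangement conditions are verified once; in your version both intersection points drift as $r_{j'}$ changes, so you should note that the transversality and disjointness from the other circles of $\mathcal{S}$ persist uniformly for all small enough configurations, not just for a single $r_{j'}$.

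One small gap to watch: with $x_{j'}$ fixed on one side of $S$ and only $r_{j'}$ varying, the angular extent of $\alpha$ stays strictly below $\pi$ (centre outside $D_{\mathcal{S}}$) or strictly above $\pi$ (centre inside $D_{\mathcal{S}}$), so a single choice of side cannot sweep through all five pole-counts. You do say ``the side of $S_{e_1,a}$ on which $x_{j'}$ is placed, together with the magnitude of $r_{j'}$'', so you are allowing both sides, but it would be cleaner to make explicit that cases (\ref{thm:2.1.1})--(\ref{thm:2.1.3}) come from the centre on one side and (\ref{thm:2.1.3})--(\ref{thm:2.1.5}) from the other. The paper's similarity model handles this seamlessly because the model line is allowed to cross the centre of $S^1$.
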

\begin{proof}
We consider the case $x_{j^{\prime},\mathcal{S}}$ is in the interior of the $(2,3) \frac{\pi}{4}$-arc of a circle $S$ of $\mathcal{S}$ and of the second finite set in $l_{(\mathcal{S},D_{\mathcal{S}}),1}(e_1)$. In this case, we can consider a chord of the circle sufficiently close to $x_{j^{\prime},\mathcal{S}}$ and parallel to segments in the red colored straight lines in FIGURE \ref{fig:1}. We can choose the new circle $S_{x_{j^{\prime}},r_{j^{\prime}}}$ containing the two points of the boundary of the chord of the circle $S$ in such a way that the region surrounded by $S_{x_{j^{\prime}},r_{j^{\prime}}}$ and the chord of $S$ is similar to the region surrounded by the red colored straight line and the lower arc of the two arcs of $S^1$ obtained from the (red colored) straight line in FIGURE \ref{fig:1}. Each of the five cases corresponds to the location of the red colored straight line and the $(j,\frac{\pi}{4})$-poles of $S^1$ with $j=0,2,4,6$. The case (\ref{thm:2.1.1}) corresponds to the case where the red colored straight line is "beyond" these poles or the set of these four poles is above the corresponding four-point set in the corresponding red colored straight line. We consider a parallel transformation of the red colored straight line from the bottom to the top and we have corresponding cases (\ref{thm:2.1.2}), (\ref{thm:2.1.3}), and (\ref{thm:2.1.4}). Last, we have the case (\ref{thm:2.1.5}) and this corresponds to the case where the corresponding four-point set in the (red colored) straight line is above the set of these four poles. 
We have checked the case (\ref{thm:2.1}) in the statement.

We consider the case $x_{j^{\prime},\mathcal{S}}$ is in the interior of the $(3,4) \frac{\pi}{4}$-arc of the circle $S$ of $\mathcal{S}$ of a circle $S$ of $\mathcal{S}$ and of the second finite set in $l_{(\mathcal{S},D_{\mathcal{S}}),1}(e_1)$. We have the case (\ref{thm:2.2}) in the statement, thanks to the symmetry.

We consider the case $x_{j^{\prime},\mathcal{S}}$ is in the interior of the $(2,3) \frac{\pi}{4}$-arc or the $(3,4) \frac{\pi}{4}$-arc of the circle of $\mathcal{S}$ and of the first finite set in $l_{(\mathcal{S},D_{\mathcal{S}}),1}(e_1)$. Although we may need to choose the chord of the circle closer to $x_{j^{\prime},\mathcal{S}}$ beforehand,  considering that our boundary is locally "concave", we can show our desired fact similarly.

For the remaining cases, we can show similarly, thanks to the symmetry.
\begin{figure}
\includegraphics[width=80mm,height=80mm]{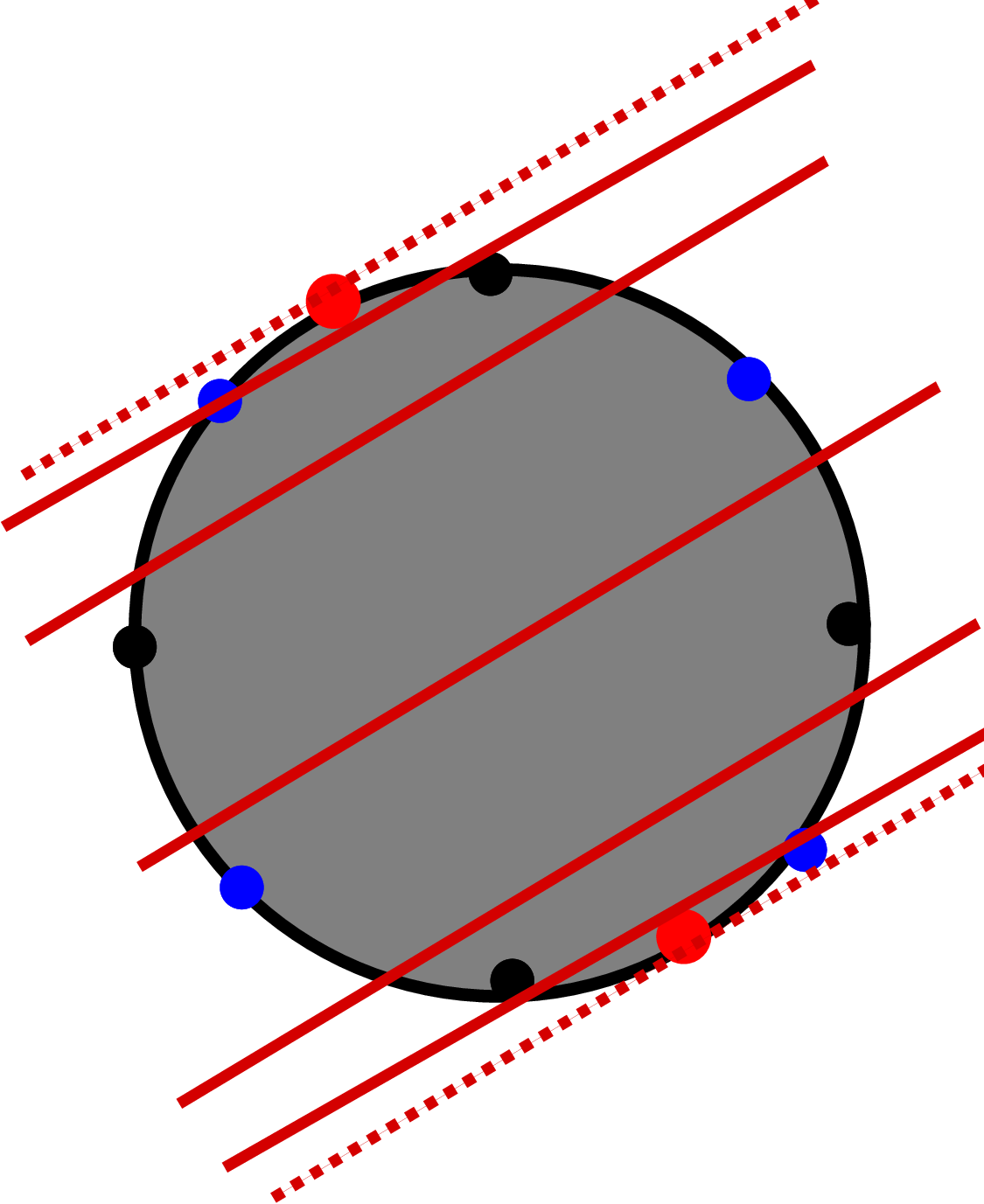}
\caption{The unit disk $D^2$ and the boundary $S^1$. Black dots show the $(j,\frac{\pi}{4})$-poles of the circle $S^1$ with $j=0,2,4,6$. Blue dots show the $(j,\frac{\pi}{4})$-poles of the circle $S^1$ with $j=1,3,5,7$. The (value of the) slopes of the red straight lines and straight dotted lines are same and they are positive and smaller than $1$. Each colored dotted line corresponds to each case of Theorem \ref{thm:2} (\ref{thm:2.1.1}), (\ref{thm:2.1.2}), (\ref{thm:2.1.3}), (\ref{thm:2.1.4}), and (\ref{thm:2.1.5}), from the bottom to the top. The red straight dotted lines are tangent lines to the points of the circle $S^1$.}
\label{fig:1}
\end{figure}
This completes the proof.
\end{proof}
The argument of the proof of Theorem \ref{thm:2} implies the following. We can easily check this and omit its proof.

\begin{Thm}
\label{thm:3}
In Theorem \ref{thm:2}, assume also that at least one finite set in the sequence $l_{(\mathcal{S},D_{\mathcal{S}}),1}(e)$ of length $2$ contains two $(j,j+1) \frac{\pi}{4}$-arcs of a circle from $\mathcal{S}$. In this case, we can have both cases {\rm (}\ref{thm:2.1}{\rm )} and {\rm (}\ref{thm:2.2}{\rm )} by choosing the points $x_{j^{\prime},\mathcal{S}}$ suitably in Theorem \ref{thm:2}.
\end{Thm}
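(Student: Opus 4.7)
The plan is to apply Theorem~\ref{thm:2} twice, once for each of the two arcs in the finite set hypothesized to contain a pair of $(j,j+1)\frac{\pi}{4}$-arcs of a single circle. The proof of Theorem~\ref{thm:2} already distinguishes two regimes: placing $x_{j',\mathcal{S}}$ in the interior of the $(2,3)\frac{\pi}{4}$-arc of a circle of $\mathcal{S}$ appearing in the second finite set of $l_{(\mathcal{S},D_{\mathcal{S}}),1}(e_1)$ yields a local change of type (\ref{thm:2.1}), while placing it in the interior of the adjacent $(3,4)\frac{\pi}{4}$-arc yields a local change of type (\ref{thm:2.2}). The analogous assertions for all other arc configurations reduce to this archetype by the symmetries of the Euclidean plane, exactly as invoked at the end of the proof of Theorem~\ref{thm:2}.

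First I would unwind the hypothesis. By Theorem~\ref{thm:1}~(\ref{thm:1.1}), the union of the arcs in a single finite set of $l_{(\mathcal{S},D_{\mathcal{S}}),1}(e)$ is a connected smooth curve, so two $(j,j+1)\frac{\pi}{4}$-arcs of the same circle lying in one finite set must be adjacent along a common $(j_0,\frac{\pi}{4})$-pole. The explicit list of forbidden pairs in Theorem~\ref{thm:1}~(\ref{thm:1.1}) excludes precisely the cases $j_0 \in \{0,4\}$, i.e., the critical poles for ${\pi}_{2,1,1}$. Hence the shared endpoint is always a non-critical pole, and exactly one of the two arcs lies on each of its two sides.

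The key step is to observe that, for each of the six allowed adjacent-arc pairs, one arc produces case (\ref{thm:2.1}) and the other arc produces case (\ref{thm:2.2}) of Theorem~\ref{thm:2}. This is the content of the explicit split in the proof of Theorem~\ref{thm:2} for the archetypal pair consisting of the $(2,3)\frac{\pi}{4}$-arc and the $(3,4)\frac{\pi}{4}$-arc, which share the $(3,\frac{\pi}{4})$-pole. At interior points of the two arcs the chord direction appearing in the construction and its position relative to the $(j,\frac{\pi}{4})$-poles of the circle with $j=0,2,4,6$ are flipped, which is exactly what distinguishes (\ref{thm:2.1}) from (\ref{thm:2.2}). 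Each of the other five allowed pairs reduces to this archetype by a horizontal or vertical reflection of $\mathbb{R}^2$, and the same argument applies if the hypothesized pair lies in the first finite set rather than the second, since the first and second sets play symmetric roles in the construction. Therefore, choosing $x_{j',\mathcal{S}}$ in the interior of one of the two arcs yields case (\ref{thm:2.1}) while choosing it in the interior of the other yields case (\ref{thm:2.2}).

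The main obstacle is the bookkeeping: one must verify, uniformly across all six adjacent-arc pairs and across both finite sets, that the chord and the similar region promised in Theorem~\ref{thm:2} can always be realized and that the assignment of each arc to (\ref{thm:2.1}) or (\ref{thm:2.2}) is consistent. Once the reflection symmetry is made explicit, however, this reduces to the archetypal case already handled in the proof of Theorem~\ref{thm:2}, which is consistent with the author's remark that the proof is omitted as an easy check.
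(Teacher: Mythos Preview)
Your proposal is correct and follows essentially the same approach as the paper, which omits the proof entirely and merely notes that it follows from the argument of Theorem~\ref{thm:2}. You have made that argument explicit: the proof of Theorem~\ref{thm:2} already shows that an interior point of the $(2,3)\frac{\pi}{4}$-arc yields case~(\ref{thm:2.1}) while an interior point of the adjacent $(3,4)\frac{\pi}{4}$-arc yields case~(\ref{thm:2.2}), and your use of Theorem~\ref{thm:1}~(\ref{thm:1.1}) to deduce that the two arcs in the hypothesized finite set must be adjacent and straddle a non-critical pole is exactly the missing piece that makes the reduction to this archetype work.
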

We review related previous studies.
\begin{Def}[\cite{kitazawa3}]
If in Definition \ref{def:1}, in each step, the new circle $S_{x_{j^{\prime}},r_{j^{\prime}}}$ is chosen as a circle of a sufficiently small radius centered at a point in some given circle of $\mathcal{S}$, then the NI arrangement of circles is called an {\it MBCC arrangement}. 
\end{Def}
There the class of NI arrangements of circles has not been introduced. We review a simplest example of MBCC arrangements.
\begin{Ex}
\label{ex:2}
We consider the case of Example \ref{ex:1}. We choose a circle centered at a point of the interior of the union of the $(2,3)\frac{\pi}{4}$-arc and the $(3,4)\frac{\pi}{4}$-arc of the circle $S^1$ and of a sufficiently small radius.
This changes the Poincar\'e-Reeb V-digraphs of $(\mathcal{S},D_{\mathcal{S}})$ for ${\pi}_{2,1,i}$ into graphs with exactly five vertices and exactly four edges. More precisely, each graph is obtained from a graph having exactly four vertices and homeomorphic to $D^1$, by attaching the fourth edge to a vertex in the interior of the previous graph along the vertices.
\end{Ex}

Note that MBCC arrangements present the local changes of the graphs same as those for Theorem \ref{thm:2} (\ref{thm:2.1.3}).

\cite{kitazawa4} has introduced another class of NI arrangements of circles: an {\it SSC-NI arrangement}. However, this class is very complicated since we need to consider several cases and arguments for the definition. We do not review its rigorous definition. There the class of NI arrangements of circles has also been defined first. We review a simplest example of SSC-NI arrangements only.
\begin{Ex}
\label{ex:3}
We consider the case of Example \ref{ex:1}. We consider a chord of $S^1$ sufficiently close to a point in the interior of the union of the $(2,3)\frac{\pi}{4}$-arc and the $(3,4)\frac{\pi}{4}$-arc of the circle $S^1$. We put a circle $S_{x_{j^{\prime}},r_{j^{\prime}}}$ in such a way that the uniquely defined region surrounded by the new circle and the chord of $S^1$ and contained in $\overline{D_{\mathcal{S}}}$ is sufficiently small.
This does not change the underlying spaces of the Poincar\'e-Reeb V-digraphs of $(\mathcal{S},D_{\mathcal{S}})$ for ${\pi}_{2,1,i}$. This increases the number of vertices of each of these graphs by two. See also \cite[FIGURE 8]{kitazawa3} for example.
\end{Ex}
Note that SSC-NI arrangements present the local changes of the graphs same as those for Theorem \ref{thm:2} (\ref{thm:2.1.1}).

We explain a case for Theorems \ref{thm:2} and \ref{thm:3} by the following example.
\begin{Ex}
\label{ex:4}
Let ${S_{2}}^1$ be the circle centered at the origin $(0,0)$ and of radius $2$. Let $\mathcal{S}=\{S^1,{S_{2}}^1\}$ and $D_{\mathcal{S}}$ the region being also an open connected set of ${\mathbb{R}}^2$ and surrounded by $S^1$ and ${S_{2}}^1$. We can choose $e_1$ in Theorem \ref{thm:3} as the image $q_{D_{\mathcal{S}},1}(\{(x_1,x_2)\mid -2 < x_1 < -1\} \bigcap \overline{D_{\mathcal{S}}})$. The $(3,\frac{\pi}{4})$-pole of ${S_{2}}^1$ is $(-\sqrt{2},\sqrt{2})$. The edge $e_2$ in Theorem \ref{thm:3} can change according to the five cases in $q_{D_{\mathcal{S}},1}(\{(x_1,x_2)\mid -2 < x_1 < -1\} \bigcap \overline{D_{\mathcal{S}}})$. Let $x_{j^{\prime},\mathcal{S}}:=({x_{j^{\prime},\mathcal{S}}}_1,{x_{j^{\prime},\mathcal{S}}}_2)$ {\rm (}for the notation from Theorem \ref{thm:2}{\rm )}. 
\begin{itemize}
\item  $-2<{x_{j^{\prime},\mathcal{S}}}_1<-\sqrt{3}$. We can choose $e_2=q_{D_{\mathcal{S}},2}(\{(x_1,x_2)\mid x_1<0, -1<x_2<1\} \bigcap \overline{D_{\mathcal{S}}})$.
The case Theorem \ref{thm:2} (\ref{thm:2.2}) can occur.
\item  $-\sqrt{3}<{x_{j^{\prime},\mathcal{S}}}_1<-\sqrt{2}$ and $1<{x_{j^{\prime},\mathcal{S}}}_2<\sqrt{2}$. 
We can choose $e_2=q_{D_{\mathcal{S}},2}(\{(x_1,x_2)\mid x_2>1\} \bigcap \overline{D_{\mathcal{S}}})$.
The case Theorem \ref{thm:2} (\ref{thm:2.2}) can occur.
\item  $-\sqrt{3}<{x_{j^{\prime},\mathcal{S}}}_1<-\sqrt{2}$ and $-\sqrt{2}<{x_{j^{\prime},\mathcal{S}}}_2<-1$. We can choose $e_2=q_{D_{\mathcal{S}},2}(\{(x_1,x_2)\mid x_2<-1\} \bigcap \overline{D_{\mathcal{S}}})$. The case Theorem \ref{thm:2} (\ref{thm:2.2}) can occur.
\item  $-\sqrt{2}<{x_{j^{\prime},\mathcal{S}}}_1<-1$ and $\sqrt{2}<{x_{j^{\prime},\mathcal{S}}}_2<\sqrt{3}$. We can choose $e_2=q_{D_{\mathcal{S}},2}(\{(x_1,x_2)\mid x_2>1\} \bigcap \overline{D_{\mathcal{S}}})$. The case Theorem \ref{thm:2} (\ref{thm:2.1}) can occur.
\item  $-\sqrt{2}<{x_{j^{\prime},\mathcal{S}}}_1<-1$ and $-\sqrt{3}<{x_{j^{\prime},\mathcal{S}}}_2<-\sqrt{2}$.  We can choose $e_2=q_{D_{\mathcal{S}},2}(\{(x_1,x_2)\mid x_2<-1\} \bigcap \overline{D_{\mathcal{S}}})$. The case Theorem \ref{thm:2} (\ref{thm:2.1}) can occur.
\end{itemize}
\end{Ex}
\section{Additional remarks.}
\label{sec:5}
We present a natural problem.
\begin{Prob}
Formulate a category generalizing or respecting labeled Poincar\'e-Reeb V-digraphs of Theorem \ref{thm:1} and the third section. For each object in the category, can we realized it by explicitly obtaining $(\mathcal{S},D_{\mathcal{S}})$ up to isomorphisms in the category.
\end{Prob}

Our present study is, originally, motivated by the following.

We explain the {\it Reeb graph} of a smooth function shortly. The {\it Reeb graph} $W_c$ of a smooth function $c:X \rightarrow \mathbb{R}$ on a smooth manifold with no boundary is the graph whose underlying space is the space contracting connected components of preimages of single points to single points and regarded as the natural quotient space of $X$. We can define the quotient space as in the Poincar\'e-Reeb graph case. 
As \cite{reeb} shows, such objects are classical and have been fundamental and strong tools in representing the manifolds compactly. \cite{saeki1, saeki2} show that $W_c$ has the structure of a graph for a considerably wide class of smooth functions where a point is a vertex if and only if it represents a connected component containing some singular points of $c$.  

\begin{Prob}

For a given graph, can we reconstruct a real algebraic function on some connected component of the zero set of some polynomial map? 
\end{Prob}

This is originally a natural problem in differentiable functions and differential topology (\cite{sharko}). After several related studies, we omit related exposition on which in the present paper, the author has first considered the real algebraic case (\cite{kitazawa1}). For related exposition, see \cite{kitazawa1, kitazawa2, kitazawa3, kitazawa4} again. 

More precisely, the closure $\overline{D_{\mathcal{S}}}$ of the region $D_{\mathcal{S}}$ is the image of a natural real algebraic map of a certain class generalizing the class of the canonical projections ${\pi}_{m+1,2,i} {\mid}_{S^{m}}:S^m \rightarrow {\mathbb{R}}^n$ of the unit spheres (\cite{kitazawa1, kitazawa2}). Example \ref{ex:1} gives the canonical projection of the unit sphere ${\pi}_{m+1,2,i} {\mid}_{S^{m}}:S^m \rightarrow {\mathbb{R}}^2$.

For real algebraic geometry, see \cite{bochnakcosteroy, kollar} for example. 

Related to the fourth section, we present the following problem.
\begin{Prob}
Theorems \ref{thm:2} and \ref{thm:3} with Example \ref{ex:4} can yield another explicit class of NI arrangements of circles. This is new in starting from considering $(j,\frac{\pi}{4})$-poles of the circles explicitly. Generally, labels for Poincar\'e-Reeb V-digraphs are considered first, in Theorem \ref{thm:1}. Can we formulate other explicit classes of NI arrangements of circles? 
 \end{Prob} 
This is also presented in \cite{kitazawa3, kitazawa4} for example.
\section{Conflict of interest and Data availability.}
\noindent {\bf Conflict of interest.} \\
The author works at Institute of Mathematics for Industry: https://www.jgmi.kyushu-u.ac.jp/en/about/young-mentors/. This is closely related to our study. We thank them for their supports. The author is also a researcher at Osaka Central
Advanced Mathematical Institute (OCAMI researcher): this is supported by MEXT Promotion of Distinctive Joint Research Center Program JPMXP0723833165. Although he is not employed there, we also thank this. \\
\ \\
{\bf Data availability.} \\
Data related to our present study essentially are all in the present paper.

\end{document}